\newcommand\blfootnote[1]{%
	\begingroup
	\renewcommand\thefootnote{}\footnote{#1}%
	\addtocounter{footnote}{-1}%
	\endgroup
}
\def\ff{{\mathcal F}}
\def\hh{{\mathcal H}}
\def\lll{{\mathcal L}}
\def\qq{{\mathcal Q}}
\DeclareMathOperator{\Tr}{Tr}
\def\B{{\mathbb{B}}}
\def\N{{\mathbb{N}}}
\def\R{{\mathbb{R}}}
\def\S{{\mathbb{S}}}
\newcommand{\jc}[1]{j^{(\alpha)}_{#1,c}}
\def\d{\,{\mathrm{d}}}
\newcommand{\wJ}{\widetilde P^{(\alpha,\beta)}}
\newcommand{\J}{P^{(\alpha,\beta)}}
\newcommand{\ps}{\psi^{(m,c)}_{k,\ell}}
\newcommand{\jp}{P^{(m)}_{k,\ell}}
\newcommand{\norm}[1]{{\left\|{#1}\right\|}}
\newcommand{\g}[1]{\textbf{#1}}
\newtheorem{lemma}{Lemma}[section]
\newtheorem{proposition}[lemma]{Proposition}
\newtheorem{theorem}[lemma]{Theorem}
\theoremstyle{definition}
\newtheorem{definition}[lemma]{Definition}
\theoremstyle{remark}
\newtheorem{remark}[lemma]{Remark}
\begin{document}

\blfootnote{This work was supported in part by the  
	DGRST  research grant  LR21ES10 and the PHC-Utique research project 20G1503.}

\title[Ball prolate spheroidal wave functions]{Further properties of ball prolates and approximation of related almost band-limited functions}
\author{Ahmed Souabni}
\address{Ahmed Souabni
\noindent Address: University of  Carthage,
Department of Mathematics, Faculty of Sciences of Bizerte, Bizerte, Tunisia.}
\email{souabniahmed@yahoo.fr}

\begin{abstract}
	In this paper we aim to give various explicit and local estimates of ball prolate spheroidal wave functions defined in \cite{Wang3} as eigenfunctions of both finite Fourier transform and some differential operator. In particular, we give further refined bounds of these functions and their related eigenvalues. As consequence, we show that ball PSWFs are well adapted for the approximation of almost band-limited functions and we compare this result with the one related to the ball polynomials. 
\end{abstract}
\maketitle
\noindent {\bf MSC : } 42C10, 65L70, 41A10.
 \\ 
  {\bf  Keywords:} Ball prolate spheroidal wave functions, Ball polynomials, Finite Fourier transform, Almost band-limited functions.\\

\section{Introduction}
Time-limited and band-limited functions are fundamental tools in signal processing. By Heisenburg's uncertainty principle, a signal can not be time and band-limited simultaneously. That is why a natural assumption is that a signal is almost band-limited. This issue has been initially carried throught Landau, Pollak and Slepian since their pioneer work in the 1960's, where prolate spheroidal wave functions have been introduced as the optimal orthogonal system to represent almost band-limited functions \cite{Hogan} \cite{landau1} \cite{landau2} \cite{Sle1}. From the investigation of the above problem, Slepian was the first to note that  PSWFs are the eigenfunctions of the finite Fourier transform operator corresponding to the eigenvalue $\lambda$, i.e
$$ \int_{-1}^1 e^{icxt} \psi(t) dt = \lambda \psi(x)  \qquad x\in I = (-1,1). $$
Slepian et al.\cite{Sle1} proved that the latter integral operator commutes with some Sturm-Liouville operator. Hence, PSWFs are also solutions of the second order differential equation 
$$ \Big((1-x^2)\psi'(x)\Big)' + (\chi - c^2x^2)\psi = 0 $$
recovered also by separation of variables for solving the Helmholtz equation in spherical coordinates. \\
This point is fundamental because that it is a perturbation of the Legendre's differential equation and in that way we link up PSWFs with orthogonal polynomials. \\
We are interested in the theory of prolate spheroidal wave functions because they have a wide range of applications and remarkable properties. Time-frequency concentration problem has been for a long while only considered over a finite interval. Then, it has been extended to other geometries as the disk, 3D ball, sphere, triangle have been considered. The reader may consult for example \cite{Beylkin} \cite{Khalid} \cite{Shkolnisky} \cite{Simons} \cite{Taylor}. \\
We are interested in the extension given by Slepian in \cite{Sle} where this problem has been extended to the d-dimensional case.
In contrast with the one dimensional case, the problem of time-frequency concentration over bounded higher dimension domain has not received enough attention.\\
In the first part of this work, we will be interested in the prolate spheroidal wave functions in the multidimensional ball. Note that the first who studied this issue was Slepian in \cite{Sle} by extending the finite Fourier transform to the d-dimension. Recently, in \cite{Wang3}, authors have given a very important contribution consisting on writing the Sturm-Liouville operator defining ball prolate spheroidal wave functions in a suitable form allowing to preserve the key features of the one-dimensional case. More precisely, they expressed the Sturm-Liouville operator of interest as a perturbation to the order $c^2 \norm{x}^2$ of the one defining the ball polynomials. Thus, we have all ingredients to develop spectral methods relative to the study of prolate spheroidal wave functions. We should mention here that whereas a more general context has been considered in \cite{Wang3},  the aim of this work is to give some refined bounds of the eigenvalues and eigenfunctions of the integral operator and to establish some other properties of the ball PSWFs.  \\
The second purpose of this work is to study the quality of approximation of almost band-limited functions by ball prolate spheroidal wave functions series expansions. In spite of their important properties, we can't handle ball PSWFs in  a straightforward way because there is no explicit formula to compute them. That is why one classical scheme is to compute explicitly their coefficients in terms of ball polynomials basis. Then, it is convenient to develop almost band-limited functions directly in the base of ball polynomials and see what happens with the quality of approximation in this framework. \\
Let us now be a little more specific and introduce some precise definitions and notations. \\
Let $\R^d$ be the d-dimensional Euclidean space,
$\g{x} $ will denote the column vector $(x_1,\cdots x_d)^T$.
We will denote the inner product over $\R^d$, for $\g{x},\g{y} \in \R^d$, by $ \displaystyle <\g{x},\g{y}>:= \sum_{i=1}^{d} x_iy_i $ and $ \norm{\g{x}}$ will denote the Euclidean associated norm $\norm{\g{x}} := \sqrt{<\g{x},\g{x}>} = \sqrt{x^2_1+ \cdots + x^2_d}$. \\
Ball prolate spheroidal wave functions are defined as solutions of the following concentration problem 
$$ \mbox{ Find } f=\arg\max_{f\in \mathcal{B}_c}\frac{\int_{\B^d}|f(\g{x})|^2 d\g{x}}{\int_{\R^d}|f(\g{x})|^2 d\g{x}}. $$
Here 
$$\mathcal{B}_c := \{ f\in L^2(\R^d) : \hat{f}(u) = 0 \quad \forall u \not \in \B^d(0,c) \},$$
$$ \mbox{ where } \B^d := \B^d(0,1), \quad \B^d(0,c) \mbox{ are more generally defined as }\B^d(0,c) := \{\g{x} \in \R^d  : \norm{\g{x}} \leq c\}. $$

\noindent The solutions of this problem are eigenfunctions of the finite Fourier transform given by 
$$\ff_c.f(\g{x}) = \int_{\B^d}e^{-ic<\g{x},\g{y}>}f(\g{y})d\g{y}.$$
On the other hand, ball PSWFs are also eigenfunctions of the following differential operator
$$ \lll_{c,\g{x}} = -\nabla.(1-\norm{\g{x}}^2)\nabla - \Delta_0 + c^2 \norm{\g{x}}^2,$$
 with $\nabla$ and $\Delta_0$ denote the gradient and the Laplace-Beltrami operators respectively.
For any positive real number $c$, we denote $\mu^{(m)}_k(c)$ and $\chi^{(m)}_k(c)$ the eigenvalues corresponding respectively to $\ff_c$ and $\lll_{c,\g{x}}$.
$$ \ff_c.\ps = \mu^{(m)}_k(c) \ps; \quad \lll_{c,\g{x}}.\ps = \chi^{(m)}_k(c) \ps ; \quad 1\leq l\leq \frac{2m+d-2}{m}\binom{m+d-3}{m-1}; \quad k,m \in \N. .$$
\noindent Note that by the form under which this last differential operator is given, by Zhang et al. in \cite{Wang3}, the ball PSWFs extend the orthogonal ball polynomials (c=0) 
$$ \jp(\g{x})= \widetilde{P}_k^{(0,m+\frac{d}{2}-1)}(2\norm{\g{x}}^2-1) Y_l^m(\hat{\g{x}}), \quad \g{x} \in \B^d,\quad 1\leq l\leq \frac{2m+d-2}{m}\binom{m+d-3}{m-1}, \quad k,m \in \N. $$
 Here $\widetilde{P}_k^{(\alpha,\beta)}$ are the normalized Jacobi polynomials that will be defined later. Note that this last form provide also a Bouwkamp spectral algorithm for the computation of ball PSWFs.
Our first result is an estimation of $\norm{\ps}_\infty$. \\
{\bf Theorem A :} Let $c>0$. For any integer $k$ such that $\displaystyle \chi^{(m)}_k>\max\{\frac{c^2+8}{(2m+d)}, \left(2/3\right)^6 \left(\frac{2\pi}{m+\frac{d}{2}-1}\right)^2+4(m+\frac{d}{2})(m+\frac{d}{2}-1)-4 +c^2  \}+m(m+d)$, we have
$$ \max_{\g{x}\in \B^d} \left| \ps(\g{x}) \right| \leq \frac{3 \sqrt{3\left(m+\frac{d}{2}-1\right)}}{2} \sqrt{\frac{N(d,m)}{\Omega_{d-1}}} \sqrt{\chi^{(m)}_k(c)}.$$
Here $ \displaystyle N(d,m) = \frac{2m+d-2}{m}\binom{m+d-3}{m-1}.$ and $\Omega_{d-1}$ denotes the surface area of $S^{d-1}$. \\
As mentioned before, and as application of this first part, we will give the quality of approximation of almost band-limited functions by ball PSWFs and by ball polynomials. We should mention here that this question has been solved in the one-dimensional case in \cite{JATpaper}. At first, let us define the concept of almost-band-limited function. \\
\begin{definition}
	Let $c>0$ and $\epsilon_c>0$. A function $f\in L^2(\R^d)$ is said to be $\epsilon_c$-band-limited function if 
	$$ \int_{\norm{\g{x}}>c} |\hat{f}(\g{x})|^2 d\g{x} \leq \epsilon_c^2 \norm{f}^2_{L^2(\R^d )}. $$
\end{definition}
The approximation of almost band-limited functions by ball prolate spheroidal functions and by ball polynomials are given by the two following theorems.\\
{\bf Theorem B :} Let $f\in L^2(\R^d)$ be an $\epsilon_c$-band-limited function. Then for any positive integer $N\geq \frac{ec}{4}$, we have 
	$$ \norm{f-S^{(M)}_N.f}_{L^2(\B^d)} \leq \left( 2\epsilon_c + C_{M,d} |\mu^{(M)}_N(c)| \left(\chi^{(M)}_N(c)\right)^{1/2} \right) \norm{f}_{L^2(\R^d)}, $$
	where $\displaystyle C_{M,d} = \frac{3}{2} \left(\frac{c}{(4\pi)^{1/4}}\right)^d \sqrt{\frac{3(M+\frac{d}{2}-1)}{\frac{d}{2}+1}}$ and $S^{(M)}_Nf$ is the orthogonal projection of a function $f\in L^2(\R^d)$ on the space spanned by the first ball prolate spheroidal wave functions .\\
 {\bf Theorem C :} 	Let $f\in L^2(\R^d)$ be an $\epsilon_c$ band-limited function. Then, for any positive integer $N\geq \frac{ec-\frac{d+1}{2}}{2} $, we have 
 	$$ \norm{f-\Pi^{(M)}_N.f}_{L^2(\B^d)} \leq \left( 2 \epsilon_c + C_N \left( \frac{ec}{2(N+1)+M+1+\frac{d+1}{2}} \right)^{2(N+1)+M+1+\frac{d+1}{2}}\right) \norm{f}_{L^2(\R^d)},$$
 	where $ \displaystyle C_{N,m,d}= \frac{1}{2^{2N+M+\frac{d}{2}+3}\sqrt{ec(4N+3M+d+4)}}\left[1+\frac{1}{4\ln \left( \frac{ec}{2N+M+2+\frac{d+1}{2}} \right)}\right]^{1/2}$ and $\Pi^{M}_Nf$ is the orthogonal projection of a function $f\in L^2(\R^d)$ on the space spanned by the first ball polynomials.\\
 	The conclusion to be drawn from the above two theorems is that the quality of approximation of almost band-limited functions is the same either by the ball prolates or by ball polynimals. The only advantage using prolates is that the truncation index is smallest.\\
The remainder of the paper is organized as follows. Section 2 is devoted to some preliminary results that will be useful afterwards. In section 3, we give some spectral properties of ball prolate spheroidal wave functions, namely the behaviour of the eigenvalues of the associated integral operator and some local estimates giving an upper bound of $\norm{\ps}_\infty$. We conclude, in section 4, by the quality of approximation of almost band-limited functions in the ball PSWFs basis comparing with the ball polynomials one.

\section{Mathematical preliminaries about some special functions }
In this section, we recall some important properties about some special functions, mainly, the ball polynomials. For this purpose, we introduce some preliminaries about spherical harmonics which appear in the definition of ball polynomials. Furthermore, we recall some properties about Bessel functions which will be frequently used throughout the forthcoming sections.

\subsection{Bessel functions}
For $\alpha>-\frac{1}{2}$, the Bessel functions $J_\alpha$ are the bounded solutions of the ordinary differential equation given by, (see for example \cite{Watson}),

\begin{equation*}
x^2y''+xy'+(x^2-\alpha^2)y=0,\quad x>0,
\end{equation*}
which is equivalent to :
\begin{equation*}
(xy')'+\Big(x-\frac{\alpha^2}{x} \Big)y=0.
\end{equation*}
Bounds and local estimates of $J_{\alpha}$ are frequently used in this paper. A first simple and useful local estimate is
given by, see \cite{Olenko}
\begin{equation*}
\label{boundJ}
\sup_{x\geq 0} \sqrt{x} |J_{\alpha}(x)| \leq c_{\alpha},
\end{equation*}
with \begin{equation*}
\label{constants}
c_{\alpha} =\left\{\begin{array}{ll}
\sqrt{2/\pi} &\mbox{ if } |\alpha|\leq 1/2\\
0.675\sqrt{\alpha^{1/3}+\frac{1.9}{\alpha^{1/3}}+
	\frac{1.1}{\alpha}}&\mbox{ if } \alpha >1/2.\end{array}\right.
\end{equation*}
A second well known estimate of the Bessel function is given in [\cite{NIST} p.227]  by 
\begin{equation} \label{estim1}
\displaystyle \Big|\frac{J_\alpha(x)}{x^\alpha} \Big| \leq \frac{1}{2^\alpha \Gamma(\alpha+1)}.
\end{equation}
When the argument is less than the order, another estimate of the Bessel function, is given by 
\begin{equation}
1 \leq \frac{J_\alpha(\alpha x)}{x^\alpha J_\alpha(\alpha)} \leq e^{\alpha(1-x)} \qquad \alpha>0 \quad\mbox{and} \quad 0<x\leq 1.
\end{equation}

In \cite{Paris}, the author has given a more precise inequality where it has been shown that 

\begin{equation} \label{paris}
\exp\Big[ \frac{\alpha^2(1-x^2)}{4\alpha+4}\Big]\leq \frac{J_\alpha(\alpha x)}{x^\alpha J_\alpha(\alpha)} \leq \exp\Big[ \frac{\alpha^2(1-x^2)}{2\alpha+4}\Big] \qquad \alpha>0 \quad\mbox{and} \quad 0<x\leq 1.
\end{equation}
The following inequality gives us a lower bound of $J_\alpha(\alpha)$ (we refer reader to \cite{Elbert})
$$ J_\alpha(\alpha) \geq \frac{\Gamma(1/3)}{2^{2/3}3^{1/6}\pi(\alpha+\alpha_0)^{1/3}} \qquad \alpha_0 \cong 0.0943498 $$
Thus, by combining the last two inequalities, one gets
\begin{equation} \label{L-B}
\displaystyle J_\alpha(\alpha x) \geq \frac{\Gamma(1/3)}{2^{2/3}3^{1/6}\pi(\alpha+\alpha_0)^{1/3}} x^\alpha \exp \Big[ \frac{\alpha^2(1-x^2)}{4\alpha+4}\Big].\qquad (\alpha>0 \quad\mbox{and} \quad 0<x\leq 1).
\end{equation}
The spherical Bessel functions are defined as (see \cite{NIST} p.262)
\begin{equation}
\label{eq:sphbess}
\jc{n}(x)=\sqrt{2(2n+\alpha+1)}\frac{J_{2n+\alpha+1}(cx)}{\sqrt{cx}}, \quad x\in (0,\infty).
\end{equation}
This latter set of functions satisfies the orthogonality relation,
$$
\int_{0}^{+\infty}\jc{n}(x)\jc{m}(x)\d x=\delta_{n,m}.
$$
Recall that the Hankel transform of a function $f\in L^2(0,\infty)$ is given by 
$$ \mathcal{H}^\alpha .f(x) := \int_{0}^\infty \sqrt{xy} J_\alpha(xy) f(y) dy; \quad \alpha>-1/2. $$
The Hankel transforms of the spherical Bessel functions are given by, (see for example \cite{Sle})
\begin{eqnarray}
\label{eq:besseljacobi}
\mathcal H^{\alpha}.\jc{n}(x)&=&\frac{\sqrt{2(2n+\alpha+1)}}{c}\left(\frac{x}{c}\right)^{\alpha+\frac{1}{2}}
P_n^{(\alpha,0)}\left(1-2\left(\frac{x}{c}\right)^2\right)
\chi_{[0,c]}(x) .
\end{eqnarray}
Here $P^{(\alpha,\beta)}_n$ are the Jacobi polynomials defined by 
$$ P_{n}^{(\alpha ,\beta)}(x) =\frac{(-1)^{n}}{2^{n}n!}(1-x)^{-\alpha
}(1+x)^{-\beta }\frac{d^{n}}{dx^{n}}\left[(1-x)^{n+\alpha }(1+x)^{n+\beta
}\right]. $$
Noting that the Hankel transform is an involution, one can write \eqref{eq:besseljacobi} in a more suitable form 
\begin{equation}\label{bessel:jacobi:bis}
	\int_{0}^1 y^{\alpha+1} J_{\alpha}(cxy) P_n^{(0,\alpha)}(2y^2-1)dy = (-1)^n \frac{J_{2n+\alpha+1}(cx)}{cx}.
\end{equation}

\subsection{Spherical harmonics}
The unit sphere $\S^{d-1}$ of $\R^d$ is denoted by 
$$ \S^{d-1}:= \{ \hat{\g{x}} \in \R^d : \norm{\hat{\g{x}}}=1 \}.$$

The inner product over $L^2(S^{d-1})$ is defined by
$$ <f,g>_{S^{d-1}} := \int_{\S^{d-1}} f(\hat{\g{x}}) g(\hat{\g{x}}) d\sigma(\hat{\g{x}}), $$
where  $d\sigma$ is the surface measure. \\
Let $\mathcal{H}^d_n$ be the space of harmonic homogeneous polynomials of degree n and $ N(d,n) := \dim\mathcal{H}^d_n$. It is well known that $ \displaystyle N(d,n) = \frac{2n+d-2}{n}\binom{n+d-3}{n-1}.$
Note that the radial and the angular variables of a function $H_n \in \mathcal{H}^d_n$ can be separated : $ H_n(\g{x}) = H_n(r\hat{\g{x}}) = r^n H_n(\hat{\g{x}}) $. Here $(r:=\norm{\g{x}}, \hat{\g{x}}:=\frac{x}{r})$ are the spherical coordinates of $\g{x}$
\begin{definition}
	A spherical harmonic of degree $n$ denoted $Y_n(\hat{\g{x}})$ is a harmonic homogeneous polynomial of degree $n$ in $d$ variables restricted to the unit $(d-1)$-sphere.
\end{definition}
It is well known that the spherical harmonics satisfy $$ \Delta_0.Y_n = -n(n+d-2) Y_n. $$ In other words, $Y_n$ are eigenfunctions of the angular part of the Laplace operator given by 
$$ \Delta_0 = \sum_{1\leq j < i \leq d} D^2_{ij} \qquad \mbox{where} \qquad D_{ij} = x_j \partial x_i - x_i \partial x_j . $$
The spherical harmonics of different degrees are orthogonal over the unit sphere, that is
$$ \int_{\S^{d-1}} Y_n(\hat{\g{x}}) Y_m(\hat{\g{x}}) d\sigma(\hat{\g{x}}) = 0 \qquad m\not = n. $$
Given a set of $N(d,n)$ linearly independent spherical harmonics of degree $n$, one can construct an orthonormal set $ \displaystyle \{ Y^{(n)}_i \}_{i=1}^{N(d,n)} $. Thus 
$$ \int_{\S^{d-1}} Y_i^{(n)}(\hat{\g{x}}) Y_j^{(m)}(\hat{\g{x}}) d\sigma(\hat{\g{x}}) = \delta_{n,m}\delta_{i,j}. $$
The spherical harmonics also satisfy the following addition formula
\begin{equation}\label{addition}
	\sum_{j=1}^{N(d,n)} Y_j^{(n)}\left(\hat{\g{x}}\right) Y_j^{(n)}(\hat{\g{y}}) = \frac{N(d,n)}{\Omega_{d-1}} \frac{C^{(\frac{d-2}{2})}_n\left(<\hat{\g{x}},\hat{\g{y}}>\right)}{C^{(\frac{d-2}{2})}_n(1)}; \qquad \Omega_{d-1} := \sigma(\S^{d-1}) = \frac{2 \pi^{\frac{d}{2}}}{\Gamma(\frac{d}{2})},
\end{equation}
 which shows that the ultra-spherical polynomial  $C^{(\frac{d-2}{2})}_n$  is the basic spherical harmonic in $d$ dimensions analogous to $\cos$ in the case $d=2$. \\
 Recall that the ultra-spherical polynomials are given by  
  $$ C_n^{(\lambda)}(x) := \frac{\Gamma(\lambda+1/2)}{\Gamma(2\lambda)} \frac{\Gamma(n+2\lambda)}{\Gamma(n+\lambda+1/2)} P_n^{(\lambda-\frac{1}{2},\lambda-\frac{1}{2})}(x); \quad C_n^{(\lambda)}(1) = \frac{\Gamma(n+2\lambda)}{\Gamma(2\lambda)\Gamma(n+1)}  \quad (n\geq 1).$$
 We will re-write the normalization of the ultra-spherical polynomials under the following form
 \begin{lemma} For any positive integer $n$ and under the above notations, we have for any $\hat{\g{x}}\in \S^{d-1}$
 	\begin{equation}\label{normalization}
 		\int_{\S^{d-1}} |C^{(\frac{d-2}{2})}_n(<\hat{\g{x}},\hat{\g{y}}>)|^2 d
 		\sigma(\hat{\g{y}}) =  \frac{\Omega_{d-1}}{N(d,n)}\left(C^{(\frac{d-2}{2})}_n(1)\right)^2.
 	\end{equation}
 \end{lemma}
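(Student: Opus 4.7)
The plan is to derive the identity directly from the addition formula \eqref{addition}, which already expresses $C^{(\frac{d-2}{2})}_n(\langle\hat{\g{x}},\hat{\g{y}}\rangle)$ as a bilinear form in an orthonormal basis of spherical harmonics. Once rearranged, the integration over $\hat{\g{y}}\in\S^{d-1}$ reduces to a straightforward application of orthonormality.

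First I would solve \eqref{addition} for the ultra-spherical polynomial, writing
\[
C^{(\frac{d-2}{2})}_n(\langle\hat{\g{x}},\hat{\g{y}}\rangle) \;=\; \frac{\Omega_{d-1}\,C^{(\frac{d-2}{2})}_n(1)}{N(d,n)}\sum_{j=1}^{N(d,n)} Y_j^{(n)}(\hat{\g{x}})\,Y_j^{(n)}(\hat{\g{y}}).
\]
Squaring and integrating in $\hat{\g{y}}$ over $\S^{d-1}$, and using the orthonormality relation $\int_{\S^{d-1}} Y_j^{(n)}(\hat{\g{y}})\,Y_{j'}^{(n)}(\hat{\g{y}})\,d\sigma(\hat{\g{y}})=\delta_{j,j'}$ (which kills all cross terms), I obtain
\[
\int_{\S^{d-1}}\!\!\!\bigl|C^{(\frac{d-2}{2})}_n(\langle\hat{\g{x}},\hat{\g{y}}\rangle)\bigr|^2 d\sigma(\hat{\g{y}}) \;=\; \left(\frac{\Omega_{d-1}\,C^{(\frac{d-2}{2})}_n(1)}{N(d,n)}\right)^{\!2} \sum_{j=1}^{N(d,n)} \bigl|Y_j^{(n)}(\hat{\g{x}})\bigr|^2.
\]

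Finally, I would evaluate the remaining sum by re-applying the addition formula \eqref{addition} at $\hat{\g{y}}=\hat{\g{x}}$, where $\langle\hat{\g{x}},\hat{\g{x}}\rangle=1$, giving $\sum_{j=1}^{N(d,n)} |Y_j^{(n)}(\hat{\g{x}})|^2 = N(d,n)/\Omega_{d-1}$, a constant independent of $\hat{\g{x}}$. Substituting this back collapses one factor of $\Omega_{d-1}/N(d,n)$ and yields exactly the claimed identity \eqref{normalization}.

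There is no real obstacle here; the proof is a two-line consequence of \eqref{addition} combined with orthonormality. The only minor point worth checking is that the constant $\sum_j |Y_j^{(n)}(\hat{\g{x}})|^2$ is indeed independent of $\hat{\g{x}}$, which is immediate from the addition formula itself (or from rotation invariance of the reproducing kernel of $\mathcal{H}^d_n$).
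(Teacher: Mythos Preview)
Your proposal is correct and follows essentially the same approach as the paper's own proof: both use the addition formula \eqref{addition} to express the squared ultra-spherical polynomial as a double sum in the orthonormal basis $\{Y_j^{(n)}\}$, collapse the integral by orthonormality, and then apply \eqref{addition} once more at $\hat{\g{y}}=\hat{\g{x}}$ to evaluate $\sum_j |Y_j^{(n)}(\hat{\g{x}})|^2 = N(d,n)/\Omega_{d-1}$. There is nothing to add.
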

\begin{proof}
	This proof is simply based on \eqref{addition} and the normalization of $\{Y_n\}_n$. In fact, 
	\begin{eqnarray}
			\int_{\S^{d-1}} |C^{(\frac{d-2}{2})}_n(<\hat{\g{x}},\hat{\g{y}}>)|^2 d
		\sigma(\hat{\g{y}}) &=& \left(\frac{\Omega_{d-1}}{N(d,n)}\right)^2 \left(C_n^{(\frac{d-2}{2})}(1) \right)^2 \int_{\S^{d-1}} \left( \sum_{j=1}^{N(d,n)} Y_j^{(n)}\left(\hat{\g{x}}\right) Y_j^{(n)}(\hat{\g{y}}) \right)^2 d\sigma(\hat{\g{y}}) \nonumber \\
		&=& \left(\frac{\Omega_{d-1}}{N(d,n)}\right)^2 \left(C_n^{(\frac{d-2}{2})}(1) \right)^2 \sum_{j=1}^{N(d,n)} Y_j^{(n)}\left(\hat{\g{x}}\right) Y_j^{(n)}(\hat{\g{x}}) \nonumber \\
		&=& \left(\frac{\Omega_{d-1}}{N(d,n)}\right)^2 \left(C_n^{(\frac{d-2}{2})}(1) \right)^2 \frac{N(d,n)}{\Omega_{d-1}}\nonumber
	\end{eqnarray}
\end{proof}
Let $Y_n$ be any spherical harmonic of degree $n$. Using the orthogonality of $Y_n$, together with \eqref{addition}, one gets 
$$ Y_n(\hat{\g{x}}) = \frac{N(d,n)}{C_n^{(\lambda)}(1)\Omega_{d-1}} \int_{\S^{d-1}} Y_n(\hat{\g{y}}) C^{(\frac{d-2}{2})}_n \left(<\hat{\g{x}},\hat{\g{y}}>\right) d\sigma(\hat{\g{y}}) .$$
By taking into account the normalization of ultra-spherical polynomials \eqref{normalization} and by Cauchy-Schwarz inequality, one gets
\begin{equation}\label{max_Y}
	\left|Y_n (\hat{\g{x}}) \right| \leq \sqrt{\frac{N(d,n)}{\Omega_{d-1}}}.
\end{equation}
To finish with this part, it is useful to note that the finite Fourier transform (over the unit sphere) of spherical harmonics is given by the following explicit expression given in {\cite{Dai}, \cite{An}}. For $\widehat{\g{x}}, \widehat{\g{y}} \in \S^{d-1}$ and $w>0$,
	\begin{equation}\label{fourier-Y}
	\int_{\S^{d-1}} e^{-iw <\widehat{\g{x}}.\widehat{\g{y}}> } Y_\ell^m(\widehat{\g{x}}) d\sigma(\widehat{\g{x}}) = \frac{(2\pi)^{d/2}(-i)^m}{w^{\frac{d-2}{2}}} J_{m+\frac{d-2}{2}}(w) Y_\ell^m(\widehat{\g{y}}) \quad  1\leq \ell \leq N(d,n), \quad m\in \N.
	\end{equation}

\subsection{Ball polynomials: Orthogonal polynomials on $\B^d$}

The ball polynomials are defined as 
$$ \jp(\g{x})= \widetilde{P}_k^{(0,m+\frac{d}{2}-1)}(2\norm{\g{x}}^2-1) Y_l^m({\g{x}}), \quad \g{x} \in \B^d\quad 1\leq\ell\leq N(d,m) \qquad k,m \in \N. $$
Here 
\begin{equation}\label{JacobiP}
	\wJ_{k}(x)= \frac{1}{\sqrt{h_k}}\J_k(x),\quad h_k=\frac{2^{\alpha+\beta+1}\Gamma(k+\alpha+1)\Gamma(k+\beta+1)}{k!(2k+\alpha+\beta+1)\Gamma(k+\alpha+\beta+1)}.
\end{equation}
It has been shown in \cite{Dai} that the ball polynomials are orthogonal with respect to the usual inner product. Recall also that the total degree of $\jp$ is $m+2k$. In addition, the ball polynomials are eigenfunctions of the following differential operator (see \cite{Dai} theorem 11.15):
\begin{equation}\label{vp_poly}
\lll_{\g{x}}.\jp(\g{x}) = \Big(-\nabla(I-\g{x}\g{x}^t)\nabla \Big)\jp(\g{x}) = (m+2k)(m+2k+d) \jp(\g{x}).
\end{equation}
Note that in \cite{Wang3}, authors have proven that this last operator can be written in different more suitable forms given by 
\begin{eqnarray}
\lll_\g{x} &=& -\nabla(1-\norm{\g{x}}^2)\nabla - \Delta_0 \nonumber \\
&=& -(1-r^2)\partial^2_r - \frac{d-1}{r} \partial_r + (d+1)r\partial_r-\frac{1}{r^2}\Delta_0,
\end{eqnarray}
Next, we will compute the finite Fourier transform of the ball polynomials $\jp$ in what follows 

\begin{lemma} For all $\g{y} = \tau \hat{\g{y}} \in \B^d$, we have
	\begin{equation}\label{finite_fourier}
		\int_{\B^d} e^{-ic<\g{x},\g{y}>} P_{j,\ell}^{(m)}(\g{x}) d\g{x} = \frac{(2\pi)^{d/2} (-i)^m (-1)^j}{2} \frac{J_{2j+m+d/2(c\tau)}}{(c\tau)^{\frac{d}{2}}}Y_{\ell}^{(m)}(\hat{\g{y}}).
	\end{equation}
\end{lemma}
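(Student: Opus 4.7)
The plan is to reduce the integral over $\B^d$ to a one-dimensional radial integral by passing to polar coordinates, then invoke the two preparatory identities already supplied in the excerpt: the Funk--Hecke-type identity \eqref{fourier-Y} for the Fourier transform of a spherical harmonic on $\S^{d-1}$, and the Bessel--Jacobi identity \eqref{bessel:jacobi:bis}. Concretely, I would write $\g{x} = r\hat{\g{x}}$ with $r \in [0,1]$ and $\hat{\g{x}} \in \S^{d-1}$, so that $d\g{x} = r^{d-1}\,dr\,d\sigma(\hat{\g{x}})$. Reading the ball polynomial as $P^{(m)}_{j,\ell}(\g{x}) = \widetilde{P}_j^{(0,m+d/2-1)}(2r^2-1)\,r^m\,Y_\ell^m(\hat{\g{x}})$ (the factor $r^m$ being required for the total degree to be $2j+m$, as stated in the paper), and using $\langle \g{x},\g{y}\rangle = \tau r \langle \hat{\g{x}},\hat{\g{y}}\rangle$, the left-hand side of \eqref{finite_fourier} factors as
\begin{equation*}
\int_0^1 r^{d-1+m}\,\widetilde{P}_j^{(0,m+d/2-1)}(2r^2-1)\left[\int_{\S^{d-1}} e^{-ic\tau r\,\langle \hat{\g{x}},\hat{\g{y}}\rangle}\,Y_\ell^m(\hat{\g{x}})\,d\sigma(\hat{\g{x}})\right] dr.
\end{equation*}

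Applying \eqref{fourier-Y} with $w = c\tau r$ to the inner angular integral produces $(2\pi)^{d/2}(-i)^m\,(c\tau r)^{-(d-2)/2}\,J_{m+(d-2)/2}(c\tau r)\,Y_\ell^m(\hat{\g{y}})$. The powers of $r$ coming from the Jacobian and this factor combine as $r^{d-1+m}\cdot r^{-(d-2)/2} = r^{m+d/2}$, and pulling the constant $(c\tau)^{-(d-2)/2}$ out of the radial integral leaves
\begin{equation*}
\int_0^1 r^{m+d/2}\,J_{m+d/2-1}(c\tau r)\,\widetilde{P}_j^{(0,m+d/2-1)}(2r^2-1)\,dr,
\end{equation*}
which is exactly of the form handled by \eqref{bessel:jacobi:bis} with $\alpha = m+d/2-1$ and $cx = c\tau$. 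That identity evaluates the integral to $(-1)^j\,J_{2j+m+d/2}(c\tau)/(c\tau)$, up to the normalization constant of the Jacobi polynomial.

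Collecting the pieces, the factor $(c\tau)^{-(d-2)/2}$ from the angular step combines with the $(c\tau)^{-1}$ from the radial step into the required $(c\tau)^{-d/2}$, the signs consolidate to $(-i)^m(-1)^j$, the spherical harmonic $Y_\ell^m(\hat{\g{y}})$ appears on the right as required, and the prefactor $(2\pi)^{d/2}$ inherited from \eqref{fourier-Y} sits in the expected place. The main thing to watch — and the only genuine obstacle in an otherwise routine calculation — is the bookkeeping of the overall scalar prefactor: the normalized Jacobi polynomial $\widetilde{P}_j = P_j/\sqrt{h_j}$ of \eqref{JacobiP} introduces a factor $1/\sqrt{h_j}$ with $h_j = 2^{m+d/2}/(2j+m+d/2)$, and one must verify carefully that, under the normalization convention adopted for $\jp$ in the statement of the lemma, this combines with the remaining constants to produce exactly the stated factor $1/2$ in front of the answer.
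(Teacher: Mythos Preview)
Your approach is essentially identical to the paper's: pass to polar coordinates, apply \eqref{fourier-Y} to the angular integral, then apply \eqref{bessel:jacobi:bis} to the remaining radial integral. Your explicit flagging of the Jacobi normalization bookkeeping is actually more careful than the paper's own proof, which silently writes the unnormalized $P_j^{(0,m+d/2-1)}$ in place of $\widetilde P_j^{(0,m+d/2-1)}$ when invoking \eqref{bessel:jacobi:bis}.
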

\begin{proof}
	Let $\g{x} = \rho \hat{\g{x}}$ and $\g{y} = \tau \hat{\g{y}}$, then using \eqref{fourier-Y} together with \eqref{bessel:jacobi:bis} one gets
	\begin{eqnarray}
		\int_{\B^d} e^{-ic<\g{x},\g{y}>} P_{j,\ell}^{(m)}(\g{x}) d\g{x} &=& 
		\int_0^{1} \rho^{m+d-1}P_k^{(0,m+d/2-1)}(2\rho^2-1) \int_{\S^{d-1}} e^{-ic\rho \tau <\hat{\g{x}},\hat{\g{y}}>} Y_{\ell}^{(m)}(\hat{\g{x}}) d\sigma(\hat{\g{x}}) d\rho \nonumber \\
		&=& \frac{(2\pi)^{d/2} (-i)^m}{(c\tau)^{d/2-1}} \int_0^1 \rho^{m+d/2} J_{m+\frac{d}{2}-1}(c\rho\tau)  P_j^{(0,m+\frac{d}{2}-1)}(2\rho^2-1) d\rho.  Y_{\ell}^{(m)}(\hat{\g{y}}) \nonumber \\
		&=& \frac{(2\pi)^{d/2} (-i)^m (-1)^j}{2(c\tau)^{\frac{d-1}{2}}} \frac{J_{2j+m+d/2(c \tau)}}{\sqrt{c\tau}}Y_{\ell}^{(m)}(\hat{\g{y}}).
	\end{eqnarray}
\end{proof}

\section{Ball prolate spheroidal wave functions : Definitions and spectral properties}
\subsection{Definition and normalization}
We should mention here that the equivalent definitions of ball prolate spheroidal wave functions  given in this paper are an association of those given by Slepian in \cite{Sle} and those given recently by Zhang and co-authors in \cite{Wang3}. We introduce the ball PSWFs in the classical way, namely as solutions of an energy maximization problem, therefore as eigenfunctions of an integral operator and finally as eigenfunctions of a suitable differential operator. \\
In this work, we adopt the following definition of the Fourier transform over $\R^d$,
$$ \ff.f(\g{x})=\hat{f}(\g{x}) = \frac{1}{(2\pi)^{d/2}} \int_{\R^d} e^{-i<\g{x},\g{y}>} f(\g{y})d\g{y}. $$
Recall that, with this normalization, one has $ \norm{\hat{f}}_{L^2(\R^d)} = \norm{f}_{L^2(\R^d)}$. The inversion formula is then written as follows $$ \displaystyle f(\g{x}) = \frac{1}{(2\pi)^{d/2}} \int_{\R^d} e^{i<\g{x},\g{y}>} \hat{f}(\g{y})d\g{y}. $$
We are dealing with the issue of most concentrated band-limited functions on the unit ball, that is 
\begin{equation}\label{max-erg}
\mbox{ Find } f=\arg\max_{f\in \mathcal{B}_c}\frac{\int_{\B^d}|f(\g{x})|^2 d\g{x}}{\int_{\R^d}|f(\g{x})|^2 d\g{x}}.
\end{equation}

From \eqref{fourier-Y}, and by writing the integral in the spherical-polar coordinates $\g{x} = r \hat{\g{x}},\quad \hat{\g{x}} \in \S^{d-1}$, one gets 
$$ \displaystyle \int_{\B^d}e^{ic<\g{x},\g{y}-\g{z}>} d\g{x} =
\frac{(2\pi)^{d/2}}{\big(c\norm{\g{y}-\g{z}}\big)^{\frac{d}{2}-1} } \int_{0}^1 r^{\frac{d}{2}} J_{\frac{d}{2}-1}(cr\norm{\g{y}-\g{z}})dr.$$
On the other hand, 
\begin{eqnarray}
\displaystyle \int_{0}^1 r^{\frac{d}{2}} J_{\frac{d}{2}-1}(cr\norm{\g{y}-\g{z}})dr &=& \frac{1}{\big(c\norm{\g{y}-\g{z}}\big)^{\frac{1}{2}}} \int_{0}^1 (cr\norm{\g{y}-\g{z}})^{\frac{1}{2}} J_{\frac{d}{2}-1}(cr\norm{\g{y}-\g{z}})dr \nonumber \\
& = &
\frac{1}{\big(c\norm{\g{y}-\g{z}}\big)^{\frac{1}{2}}} \int_{\R} (r\norm{\g{y}-\g{z}})^{\frac{1}{2}} J_{\frac{d}{2}-1}(r\norm{\g{y}-\g{z}}) \Big(\frac{r}{c} \Big)^{\frac{d-1}{2}} \chi_{[0,c]}(r) dr \nonumber \\
&=& \frac{J_{d/2}\Big(c \norm{\g{y}-\g{z}})\Big)}{c \norm{\g{y}-\g{z}}} .
\end{eqnarray} 
Note that the last equality is obtained for n=0 in \eqref{bessel:jacobi:bis}.
Finally, by combining the last two equations, one gets
$$ \int_{\B^d}e^{ic<\g{x},\g{y}-\g{z}>} d\g{x} = (2\pi)^{d/2}\frac{J_{d/2}\Big(c \norm{\g{y}-\g{z}})\Big)}{\big(c\norm{\g{y}-\g{z}}\big)^{\frac{d}{2}} }. $$
We retain this result, that will be useful in the sequel, in the following lemma. 
\begin{lemma}
	For any positive real number $c$, and for any $\g{y},\g{z} \in \B^d$, we have 
	\begin{equation} \label{fourier_fini}
	\int_{\B^d}e^{ic<\g{x},\g{y}-\g{z}>} d\g{x} = (2\pi)^{d/2}\frac{J_{d/2}\Big(c \norm{\g{y}-\g{z}})\Big)}{\big(c\norm{\g{y}-\g{z}}\big)^{\frac{d}{2}} } .
	\end{equation}
\end{lemma}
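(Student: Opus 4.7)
The plan is to follow a polar-coordinate reduction and separate the angular from the radial integration. The integrand $e^{ic\langle\g{x},\g{y}-\g{z}\rangle}$ depends only on the inner product $\langle\g{x},\g{y}-\g{z}\rangle$, so once I write $\g{x}=r\hat{\g{x}}$ with $r\in[0,1]$ and $\hat{\g{x}}\in\S^{d-1}$, the $d\g{x}$ measure becomes $r^{d-1}\,dr\,d\sigma(\hat{\g{x}})$. The key observation is that the angular integral is of the form $\int_{\S^{d-1}} e^{-iw\langle\hat{\g{x}},\hat{\g{u}}\rangle}\,d\sigma(\hat{\g{x}})$ with $\hat{\g{u}}=(\g{z}-\g{y})/\|\g{y}-\g{z}\|$ and $w=cr\|\g{y}-\g{z}\|$. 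This is precisely \eqref{fourier-Y} applied to the constant spherical harmonic $Y_0^0\equiv 1/\sqrt{\Omega_{d-1}}$, i.e.\ with $m=0$, so it yields a multiple of $J_{d/2-1}(cr\|\g{y}-\g{z}\|)$ divided by $(cr\|\g{y}-\g{z}\|)^{(d-2)/2}$.

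After this reduction, the whole expression collapses to
\[
\int_{\B^d} e^{ic\langle\g{x},\g{y}-\g{z}\rangle}\,d\g{x} = \frac{(2\pi)^{d/2}}{(c\|\g{y}-\g{z}\|)^{d/2-1}} \int_0^1 r^{d/2} J_{d/2-1}(cr\|\g{y}-\g{z}\|)\,dr,
\]
so everything reduces to computing the one-dimensional radial integral. For this, I would invoke the identity \eqref{bessel:jacobi:bis} specialized at $n=0$ and $\alpha=d/2-1$: since $P_0^{(0,\alpha)}\equiv 1$, that identity gives
\[
\int_0^1 y^{d/2} J_{d/2-1}(cxy)\,dy = \frac{J_{d/2}(cx)}{cx},
\]
with $x=\|\g{y}-\g{z}\|$.

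Substituting back, the factor $(c\|\g{y}-\g{z}\|)^{-(d/2-1)}$ combines with $(c\|\g{y}-\g{z}\|)^{-1}$ to produce $(c\|\g{y}-\g{z}\|)^{-d/2}$, giving the claimed formula. There is no genuine obstacle here: the proof is a direct assembly of two tools already recorded in the preliminaries, namely the Funk--Hecke type identity \eqref{fourier-Y} and the Hankel-transform identity \eqref{bessel:jacobi:bis} at degree zero. The only minor care point is to handle the degenerate case $\g{y}=\g{z}$, where both sides must be interpreted by continuity: the left-hand side is the volume of $\B^d$, and the right-hand side extends continuously using $J_{d/2}(u)/u^{d/2}\to 1/(2^{d/2}\Gamma(d/2+1))$ as $u\to 0$, which matches $|\B^d|/(2\pi)^{d/2}$ as required.
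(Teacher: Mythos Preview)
Your proposal is correct and follows essentially the same route as the paper: reduce to spherical--polar coordinates, evaluate the angular integral via \eqref{fourier-Y} with $m=0$, and then compute the remaining radial integral using the $n=0$ case of \eqref{bessel:jacobi:bis}. Your additional remark on the degenerate case $\g{y}=\g{z}$ is a nice touch that the paper omits.
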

Now, let $f \in \mathcal{B}_c$, 
\begin{eqnarray}
\frac{ \displaystyle \int_{\B^d}|f(\g{x})|^2 d\g{x}}{\displaystyle \int_{\R^d}|f(\g{x})|^2 d\g{x}} &= & \frac{1}{(2\pi)^d}
\frac{\displaystyle \int_{\B^d} \Big(\displaystyle \int_{\R^d}e^{i<\g{x},\g{y}>}\hat{f}(\g{y})d\g{y}\Big)\Big( \displaystyle \int_{\R^d}e^{-i<\g{x},\g{z}>}\overline{\hat{f}(\g{z})}d\g{z}\Big) d\g{x}}{\displaystyle\int_{\R^d}\hat{f}(\g{x}) \overline{\hat{f}(\g{x})} d\g{x}} \nonumber \\
&=&\Big( \frac{1}{2\pi}\Big)^{d} \frac{\displaystyle \int_{\B^d(0,c)} \Bigg(\displaystyle \int_{\B^d(0,c)} \Big(\displaystyle \int_{\B^d}e^{i<\g{x},\g{y}-\g{z}>} d\g{x} \Big) \hat{f}(\g{y}) d\g{y} \Bigg) \overline{\hat{f}(\g{z})} d\g{z} }{\displaystyle \int_{\R^d}\hat{f}(\g{x}) \overline{\hat{f}(\g{x})} d\g{x}} \nonumber \\
&=& \Big( \frac{1}{2\pi} \Big)^{d/2} \frac{\displaystyle \int_{\B^d(0,c)} \Bigg(\displaystyle \int_{\B^d(0,c)} \frac{J_{d/2}(\norm{\g{y}-\g{z}})}{\Big(\norm{\g{y}-\g{z}}\Big)^{\frac{d}{2}}} \hat{f}(\g{y}) d\g{y} \Bigg) \overline{\hat{f}(\g{z})} d\g{z} }{\displaystyle \int_{\B^d(0,c)}\hat{f}(\g{x}) \overline{\hat{f}(\g{x})} d\g{x}} .
\end{eqnarray}

Hence, by a straightforward change of variable and function, the solutions of $$ \arg\max_{f\in \mathcal{B}_c}(2\pi)^{\frac{d}{2}} \frac{ \displaystyle \int_{\B^d}|f(\g{x})|^2 d\g{x}}{\displaystyle \int_{\R^d}|f(\g{x})|^2 d\g{x}} $$ are the eigenfunctions of the integral operator $\qq_c$ given by 
\begin{equation}
\qq_c.f(\g{x}) = \displaystyle \Big( \frac{c}{2\pi} \Big)^{d} \int_{\B^d} (2\pi)^{d/2}\frac{J_{d/2}(c\norm{\g{y}-\g{z}})}{\Big(c\norm{\g{y}-\g{z}}\Big)^{\frac{d}{2}}} f(\g{y}) d\g{y} .
\end{equation}

One can easily check that $ \qq_c = \left( \frac{c}{2\pi} \right)^{d}  \ff_c^* \circ \ff_c $ where $\ff_c $ is the finite Fourier integral operator defined on $L^2(\B^d)$ by 

\begin{equation}
\ff_c.f(\g{x}) = \int_{\B^d}e^{-ic<\g{x},\g{y}>}f(\g{y})d\g{y}.
\end{equation}
It has been shown in \cite{Wang3} that
\begin{itemize}
	\item The eigenfunctions of $\qq_c$ are the same as those of the following positive self-adjoint differential operator
	\begin{eqnarray}\label{diff_op}
		\lll_{c,\g{x}} &=& -\nabla.(1-\norm{\g{x}}^2)\nabla - \Delta_0 + c^2 \norm{\g{x}}^2 \nonumber \\
		&=& -(1-r^2)\partial^2_r - \frac{d-1}{r} \partial_r + (d+1)r\partial_r-\frac{1}{r^2}\Delta_0 + c^2 r^2.
	\end{eqnarray}
	\item  Eigenfunctions of both $\lll_{c,\g{x}}$ and $\qq_c$ (or equivalently $\ff_c$) can be written under the following form 
	\begin{equation} \label{sep_form}
		\ps(\g{x}) = r^m \phi_k^{(m,c)} (2r^2-1) Y_\ell^m(\hat{x}), 
	\end{equation}
	Here $(r:=\norm{\g{x}}, \hat{\g{x}}:=\frac{x}{r})$ are the spherical coordinates of $\g{x}$, $k,m$ are two positive integers, $1\leq \ell \leq N(d,m)$ and $\phi_k^{(m,c)}$ satisfies 
	\begin{equation}\label{diff-rad}
		\frac{-1}{\omega_{0,\beta_{m,d}}(\eta)}\partial_\eta \Big(\omega_{1,\beta_{m,d}+1}(\eta) \partial_\eta \phi_k^{(m,c)}  \Big) + \frac{c^2(\eta+1)}{8} \phi_k^{(m,c)} = \alpha_{k,m}(c)\phi_k^{(m,c)}
	\end{equation}
with $\eta = 2r^2-1$, $\gamma_{m,d}=m(m+d)$ and $\omega_{\alpha,\beta}(x)=(1-x)^\alpha(1+x)^\beta.$
	\item The eigenvalues of $\lll_{c,\g{x}}$, $\ff_c$ and $\qq_c$ denoted respectively by $\chi^{(m)}_k(c)$, $\mu^{(m)}_k(c)$, $\lambda^{(m)}_k(c)$ are independent of $\ell$. 
\end{itemize}
We define the ball spheroidal wave functions as solutions of the energy maximization problem \eqref{max-erg} which are then at the same time  eigenfunctions of all operators $\qq_c$, $\ff_c$ and $\lll_{c,\g{x}}$. Consequently, one can write, for $c>0$, $k,m\in \N$ and $1\leq \ell \leq N(d,m)$
$$ \lll_{c,\g{x}}.\ps = \chi^{(m)}_k(c) \ps ; \quad \ff_{c}.\ps = \mu^{(m)}_k(c) \ps ; \quad \qq_{c}.\ps = \lambda^{(m)}_k(c) \ps, $$
where $$ \lambda^{(m)}_k = \left(\frac{c}{2\pi}\right)^d |\mu^{(m)}_k|$$
\begin{remark} \label{connection_d2}
	We use in this remark the separated form of the ball prolate spheroidal wave functions to show that the radial part of these functions are also eigenfunctions of the finite Hankel transform. We should mention that this remark has been given differently in \cite{Sle}. \\
Thanks to \eqref{sep_form} and \eqref{fourier-Y}, one can write
$$ \lambda_k^{(m)} r^m \phi_k^{(m,c)} (2r^2-1) = (2\pi)^{d/2} (-i)^m \int_0^1 \frac{J_{m+\frac{d}{2}-1}(cr\tau)}{(cr\tau)^{\frac{d}{2}-1}} \tau^{m+d-1}\phi_k^{(m,c)} (2r^2-1) d\tau .  $$
It is convenient to make the substitution $ \varphi_k^{(m,c)}(r) := r^{m+\frac{d-1}{2}}\phi_k^{(m,c)} (2r^2-1) $ in order to obtain 
$$ \lambda_k^{(m)} \varphi_k^{(m,c)}(r) \sqrt{c}\big( \frac{c}{2\pi} \big)^{d/2} (i)^m
= \int_{0}^1 J_{m+\frac{d}{2}-1}(cr\tau)
\sqrt{cr\tau}\varphi_k^{(m,c)}(\tau) d\tau . $$
Then 
\begin{equation}{\label{relation2}}
 \alpha_k^{(m)} \varphi_k^{(m)}(r) = \int_{0}^1 J_{\alpha}(cr\tau) \sqrt{cr\tau}\varphi_k^{(m)}(\tau) d\tau=\mathcal{H}^{(\alpha)}_c.\varphi_k^{(m)}(r) ; \quad \alpha=m+\frac{d}{2}-1 ; \quad \alpha_k^{(m)}= \sqrt{c}\big( \frac{c}{2\pi}\big)^{d/2} \lambda_k^{(m)}(c) .
\end{equation}
 Here $\mathcal{H}^{(\alpha)}_c $ is the finite Hankel transform given by 
 $$ \mathcal{H}^{(\alpha)}_c.f(x) = \int_{0}^1 \sqrt{cxy} J_\alpha(cxy) f(y)dy .$$
 It may be useful to note that the problem of the behavior of eigenvalues of the finite Hankel transform has been extensively studied, see for example \cite{Karoui-Boulsane}, \cite{Karoui-Moumni-1} and \cite{Boulsane}. 
\end{remark}
	 We note finally that the ball PSWFs are normalized through the following rule :
	 $$ \int_{\B^d} \big(\ps(\g{x})\big)^2 d\g{x} = 1, $$
	 or equivalently, in terms of the radial part:
	 $$ \int_{0}^1 r^{2m+d-1} \Big(\phi_k^{(m,c)} (2r^2-1)\Big)^2 dr = 1 \quad \mbox{or} \quad \int_{-1}^1 (1+t)^{m+\frac{d}{2}-1} |\phi_k^{(m,c)}|^2(t) dt = 2^{m+\frac{d}{2}-1}.  $$
	 Finally, we write the ball prolate in terms of limiting operators : 
	 Let $U$ be a set of finite measure in $\R^d$ and $\mathcal{D}(U)$ be the subspace of $L^2(\R^d)$ formed by the functions supported in $U$,
	 $$ \mathcal{D}(U) = \{ f \in L^2(\R^d) : f(x)=0 \quad \forall x \not \in U \},  $$
	 and recall that $\mathcal{B}_c$ is the Paley-Wiener space formed by functions whose Fourier transform are supported in $ \B^d(0,c)$,
	 $$ \mathcal{B}_c = \{ f\in L^2(\R^d) : \mathcal{F}.f(u) = 0 \quad \forall u \not \in \B^d(0,c) \}. $$
	 Let $ D_U.f(x) = \chi_U(\g{x}) f(\g{x}) $ be the orthogonal projection of $L^2(\R^d)$ onto $\mathcal{D}(U)$ and 
	 $ B_c.f(x) = \mathcal{F}^{-1} \chi_{\B^d(0,c)}\mathcal{F}.f(x) $ 
	 be the orthogonal projection of $L^2(\R^d)$ onto $\mathcal{B}_c$.\\
	 Using \eqref{fourier_fini}, 
	 $$ B_c.f(x) = \Big(\frac{c}{2\pi}\Big)^{d/2} \int_{\R^d} (2\pi)^{d/2} \frac{J_{d/2}(c\norm{\g{x}-\g{y}})}{(c\norm{\g{x}-\g{y}})^{d/2}}  f(\g{y})d\g{y} $$
	 Then, one can write the integral operator $\mathcal{Q}_c$ in terms of the limiting operators as 
	 $$ \displaystyle \mathcal{Q}_c = D_{\B^d}B_c D_{\B^d}$$
	 \subsection{Behaviour of $\lambda^{(m)}_n(c)$ }

Recently in \cite{Boulsane}, the author has given a well precise behaviour of the eigenvalues $\alpha_k^{(m)}$ which are related, by remark \ref{connection_d2}, to the eigenfunctions corresponding to ball prolate functions. Mainly, one can derive an asymptotic super-exponential decay of $\lambda_n^{m}(c)$ directly by  writing theorem 3.2 of \cite{Boulsane} under our notation :

\begin{proposition}
	For given real numbers $m,c>0$, there exists a constant $A$ depending only on $c$, such that for every $n>\frac{ec}{4}$, we have 
	\begin{equation}
	\lambda_n^{(m)}(c) \leq A  \Bigg[\frac{ec}{4n+2m+d}\Bigg]^{4n+2m+d}.
	\end{equation}
	
\end{proposition}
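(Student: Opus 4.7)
The plan is to reduce the statement to a previously established decay estimate for eigenvalues of the finite Hankel transform, via the identification of the radial parts of the ball PSWFs with Hankel eigenfunctions. Concretely, Remark \ref{connection_d2} shows that the radial factor $\varphi_k^{(m,c)}(r) = r^{m+(d-1)/2}\phi_k^{(m,c)}(2r^2-1)$ satisfies
$$\mathcal{H}^{(\alpha)}_c.\varphi_k^{(m,c)} = \alpha_k^{(m)}\,\varphi_k^{(m,c)}, \qquad \alpha = m+\tfrac{d}{2}-1,\qquad \alpha_k^{(m)} = \sqrt{c}\Bigl(\tfrac{c}{2\pi}\Bigr)^{d/2}\lambda_k^{(m)}(c).$$
So up to a $c$-dependent factor, the eigenvalues of $\mathcal{Q}_c$ are exactly the eigenvalues of $\mathcal{H}^{(\alpha)}_c$ acting on $L^2([0,1],dr)$, with Hankel order $\alpha = m+\tfrac{d}{2}-1 > -\tfrac12$.

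First, I would invoke Theorem 3.2 of \cite{Boulsane}, which gives a super-exponential decay bound for the eigenvalues of $\mathcal{H}^{(\alpha)}_c$: for some constant $A' = A'(c,\alpha)$ and every $n$ with $n > ec/4$,
$$|\alpha_n^{(m)}| \leq A'\Bigl[\frac{ec}{4n+2\alpha+2}\Bigr]^{4n+2\alpha+2}.$$
With $\alpha = m + \tfrac{d}{2}-1$ we have $2\alpha+2 = 2m+d$, so the exponent and base match exactly the ones in the conclusion of the proposition.

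Second, I would rewrite the bound back in terms of $\lambda_n^{(m)}(c)$ by multiplying through by $c^{-1/2}(2\pi/c)^{d/2}$, which is a positive constant depending only on $c$ (and on the fixed dimension $d$); absorbing it into $A'$ yields a new constant $A = A(c)$ for which
$$\lambda_n^{(m)}(c) \leq A\Bigl[\frac{ec}{4n+2m+d}\Bigr]^{4n+2m+d},$$
as claimed. Positivity of $\lambda_n^{(m)}(c)$ (since $\mathcal{Q}_c = (c/2\pi)^d\,\mathcal{F}_c^{\ast}\mathcal{F}_c$ is positive self-adjoint) lets us drop the absolute value.

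The only non-cosmetic step is the first one: verifying that the hypotheses of the cited Hankel-eigenvalue result are met for every admissible $(m,d)$. Since the proposition assumes $m,c>0$ and the dimension $d\geq 1$ is fixed, the Hankel order $\alpha = m+\tfrac{d}{2}-1 > -\tfrac12$ is automatic, and the threshold $n > ec/4$ in the proposition matches the threshold in Theorem 3.2 of \cite{Boulsane}. No estimate on $\chi_n^{(m)}(c)$ or additional spectral analysis is needed here; the work has been done upstream in the Hankel setting, and Remark \ref{connection_d2} is the bridge. The main obstacle, if any, is bookkeeping to confirm that the normalization of $\mathcal{H}^{(\alpha)}_c$ used in \cite{Boulsane} coincides with the one adopted here, but both employ the symmetric kernel $\sqrt{cxy}\,J_\alpha(cxy)$, so the constants transfer directly.
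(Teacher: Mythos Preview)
Your proposal is correct and matches the paper's approach exactly: the paper does not give a self-contained proof but simply states that the bound follows ``directly by writing theorem 3.2 of \cite{Boulsane} under our notations,'' using Remark~\ref{connection_d2} as the bridge between $\lambda_n^{(m)}(c)$ and the Hankel eigenvalues $\alpha_n^{(m)}$. Your write-up is in fact more careful than the paper's, spelling out the substitution $\alpha=m+\tfrac{d}{2}-1$ (so $2\alpha+2=2m+d$) and the absorption of the $c$-dependent factor into the constant~$A$.
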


In the following proposition, we provide the reader with a lower bound of the eigenvalues $\lambda_n^{m}(c)$.

\begin{proposition}
	For any positive real number $c>0$ and for any $n\geq \frac{ec}{4}$, we have
	$$ \lambda_n^{m}(c) \geq C \Big( \frac{ec}{4n+2m+d}\Big)^{4n+2m+d}, $$
	where $C = \displaystyle \Big(\frac{\Gamma(1/3)}{2^{2/3}3^{1/6} \pi (2k+\alpha+\alpha_0+1)^{1/3}}\Big)^2 \frac{\left(2\pi\right)^{d/2}}{c^{\frac{d+1}{2}}} $, $\alpha$ and $\alpha_0$ are defined in \eqref{relation2} and \eqref{L-B} respectively. 
\end{proposition}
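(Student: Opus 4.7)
The plan is to translate the claim via Remark \ref{connection_d2} into a lower bound on $|\alpha_n^{(m)}|$, the $n$-th eigenvalue (by decreasing modulus) of the finite Hankel transform $\mathcal{H}_c^{(\nu)}$ on $L^2(0,1)$, with $\nu = m+d/2-1$. Using $\alpha_n^{(m)} = \sqrt{c}(c/2\pi)^{d/2}\lambda_n^{(m)}(c)$, an elementary algebraic check shows that the claimed constant $C$ is chosen precisely so that the statement is equivalent to
$$
|\alpha_n^{(m)}| \geq \Bigl[\frac{\Gamma(1/3)}{2^{2/3}3^{1/6}\pi(\mu+\alpha_0)^{1/3}}\Bigr]^2 \Bigl(\frac{ec}{2\mu}\Bigr)^{2\mu}, \qquad \mu = 2n+m+d/2.
$$

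The key tool is the Bessel--Jacobi identity \eqref{bessel:jacobi:bis}, which yields the clean transformation
$\mathcal{H}_c^{(\nu)} \tilde g_k = (-1)^k\, \jc{k}|_{(0,1)}$, where $\tilde g_k(y) = \sqrt{2(2k+\nu+1)}\,y^{\nu+1/2}P_k^{(0,\nu)}(2y^2-1)$ forms an orthonormal basis of $L^2(0,1)$. This diagonalizes the action of $\mathcal{H}_c^{(\nu)}$ on this basis in terms of the spherical Bessel functions $\jc{k}$. I would then apply the Courant--Fischer max-min principle to $(\mathcal{H}_c^{(\nu)})^2$:
$$
|\alpha_n^{(m)}|^2 = \max_{\dim V = n+1}\ \min_{f\in V,\,\|f\|_{L^2(0,1)}=1} \|\mathcal{H}_c^{(\nu)} f\|_{L^2(0,1)}^2,
$$
with the natural trial subspace $V_n = \mathrm{span}\{\tilde g_0,\dots,\tilde g_n\}$. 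Writing $f = \sum_{k=0}^n a_k\tilde g_k$, the quadratic form becomes $a^{T}Ga$ where $G_{jk} = (-1)^{j+k}\int_0^1 \jc{j}\jc{k}\,dx$ is the Gram matrix of the $\{(-1)^k\jc{k}\}_{k=0}^n$ on $L^2(0,1)$, so that $|\alpha_n^{(m)}|^2 \geq \lambda_{\min}(G)$.

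To estimate $\lambda_{\min}(G)$, use the determinantal lower bound $\lambda_{\min}(G) \geq \det(G)$ (since each $\lambda_k(G) \leq 1$ as $G$ is a principal compression of $(\mathcal{H}_c^{(\nu)})^2 \preceq I$). By the orthonormality of $\{\jc{k}\}$ on $L^2(0,\infty)$, one has $G = I - G^{\mathrm{tail}}$ where $G^{\mathrm{tail}}_{jk} = (-1)^{j+k}\int_1^\infty \jc{j}\jc{k}\,dx$, and $\det(G)$ factorizes (to leading order) as $\prod_{k=0}^n\bigl(1 - \|\jc{k}|_{(1,\infty)}\|^2\bigr)$ times a perturbative correction. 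The crucial estimate then comes from applying the Paris lower bound \eqref{L-B} to $J_{2k+\nu+1}$ at arguments $cx \leq c \leq 2\mu/e$ (the regime is secured by the hypothesis $n\geq ec/4$, which gives $2k+\nu+1 \geq ec/2$ for $0\leq k\leq n$), and combining with Stirling's approximation for $\Gamma(\mu+1)$; the combination $(c/\mu)^{2\mu} e^{\mu(\cdots)}$ rearranges via Stirling into the target factor $(ec/(2\mu))^{2\mu}$, while the square on the Bessel-lower-bound constant $\Gamma(1/3)/(\cdots(\mu+\alpha_0)^{1/3})$ appears because $|\alpha_n^{(m)}|^2$ involves two Bessel-bound factors.

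The principal obstacle is the tight control of $\det(I - G^{\mathrm{tail}})$, i.e., showing that the off-diagonal truncation terms $\int_1^\infty \jc{j}\jc{k}\,dx$ for $j\neq k$ do not cause spurious cancellation in the determinant at a rate faster than the desired $(ec/(2\mu))^{2\mu}$. This requires the uniform Bessel lower bound \eqref{L-B} together with Stirling estimates to be executed carefully across all indices $0\leq k\leq n$, and is the delicate step where the precise constants of the statement are produced.
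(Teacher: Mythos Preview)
Your reduction via Remark~\ref{connection_d2} and the use of the Bessel--Jacobi identity match the paper's setup, but the route you take from the Courant--Fischer inequality to the final bound is both different from and weaker than the paper's.

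The paper does \emph{not} pass through the full Gram matrix or its determinant. After invoking the max--min characterization for the eigenvalues of $\qq_c^{(\alpha)}:=c\,\hh_c^{(\alpha)}\circ\hh_c^{(\alpha)}$, it simply takes the single test vector $T_n^{(\alpha)}(y)=\sqrt{2(2n+\alpha+1)}\,y^{\alpha+1/2}P_n^{(0,\alpha)}(2y^2-1)$ and uses
\[
\alpha_n^{(m)}(c)\ \ge\ \langle \qq_c^{(\alpha)} T_n^{(\alpha)},T_n^{(\alpha)}\rangle
= c\,\|\hh_c^{(\alpha)}T_n^{(\alpha)}\|_{L^2(0,1)}^2
= c\int_0^1 \bigl|j_n^{(\alpha)}(x)\bigr|^2\,dx,
\]
i.e.\ exactly the diagonal entry $G_{nn}$ in your notation. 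The Paris--Elbert lower bound \eqref{L-B} is then applied once, to $J_{2n+\alpha+1}$, and the exponent $(ec/(4n+2m+d))^{4n+2m+d}$ drops out directly.

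Your determinantal detour $\lambda_{\min}(G)\ge\det(G)$ is a genuine gap, not merely a longer path. Under the hypothesis $n\ge ec/4$ there are many indices $k$ in the range between (roughly) the plunge region and $n$ for which $G_{kk}=\int_0^1|j_k^{(\alpha)}|^2$ is already super-exponentially small; the product $\prod_{k=0}^n G_{kk}$ (which upper-bounds $\det(G)$ by Hadamard) therefore decays like $\prod_{k}(ec/(4k+\cdots))^{4k+\cdots}$, which is vastly smaller than the single factor $(ec/(4n+2m+d))^{4n+2m+d}$ that the statement asserts. No ``perturbative correction'' coming from the off-diagonal tail can repair this loss: the problem is already in the diagonal part. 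In addition, your assertion that ``$2k+\nu+1\ge ec/2$ for $0\le k\le n$'' is false --- the hypothesis $n\ge ec/4$ secures this only at $k=n$ --- so the Paris bound \eqref{L-B} is not even available uniformly across the indices you need.

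The fix is to abandon the determinant entirely and argue, as the paper does, with the single test function $T_n^{(\alpha)}$.
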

\begin{proof}
 We will prove this lower bound for the eigenfunctions of the finite Hankel transform and use again the remark giving the relation between the eigenfunctions of the ball PSWFs and those of the circular ones \eqref{relation2}. For this purpose, we recall that the eigenfunctions of $\qq^{(\alpha)}_c:=c\hh^{(\alpha)}_c \circ \hh^{(\alpha)}_c $ are characterized, using Courant-Fischer max-min theorem, by 
 $$ \alpha^{(m)}_n (c) = \max_{V\in G_n} \min_{y\in V;\norm{y}=1} <\qq^{(\alpha)}_c y;y> $$
 Since $\left(T^{(\alpha)}_n :=\sqrt{2(2n+\alpha+1)}x^{\alpha+1/2}P_n^{(0,\alpha)}(2x^2-1)\right)_n$ is an orthonormal basis of $L^2(0,1)$, then
$$ \alpha^{(m)}_n(c) \geq <\qq^{(\alpha)}_c T^{(\alpha)}_n,T^{(\alpha)}_n>_{L^2([0,1])} . $$
 On the other hand, using \eqref{eq:besseljacobi} together with \eqref{eq:sphbess}, one gets
 \begin{equation*}
 \displaystyle
 \norm{\hh^{(\alpha)}_c.T^{(\alpha)}_k}^2_2 = \norm{j^{(\alpha)}_k}^2_2 = 2(2k+\alpha+1) \int_{0}^1 \Bigg| \frac{J_{2k+\alpha+1}(cx)}{\sqrt{cx}}\Bigg|^2 dx.
 \end{equation*}
 Thanks to \eqref{L-B}, one can write,
$$
J_{2k+\alpha+1}(cx)\geq \frac{\Gamma(1/3)}{2^{2/3}3^{1/6} \pi (2k+\alpha+\alpha_0+1)^{1/3}}
e^{-(2k+\alpha+1) \Big(\ln(2k+\alpha+1)-1/4 \Big) } (cx)^{2k+\alpha+1} e^{\frac{-(cx)^2}{4(2k+\alpha+1)}} $$
Therefore, 
\begin{eqnarray}
\norm{\hh^{(\alpha)}_c.T^{(\alpha)}_k}_2  &\geq& C^2 
e^{-2(2k+\alpha+1) \Big(\ln(2k+\alpha+1)-1/4 \Big) } \int_{0}^c x^{2(2k+\alpha+1/2)}e^{-\frac{x^2}{2(2k+\alpha+2)}} dx  \nonumber \\
&\geq& C^2 
e^{-2(2k+\alpha+1) \Big(\ln(2k+\alpha+1) \Big) } \int_{0}^c x^{2(2k+\alpha+1/2)}e^{-\frac{x^2}{2(2k+\alpha+2)}} dx \nonumber \\
&\geq& C^2 
\Big( \frac{ec}{2(2k+\alpha+1)}\Big)^{2k+\alpha+1}. \nonumber
\end{eqnarray} 
To conclude for the proof, it suffices to use remark \ref{connection_d2}.
\end{proof}

We will give in the next proposition a brief description, to the first order, of the counting number of the eigenvalues $\lambda^{(m)}_n(c) $.We use here the well known Landau's technique \cite{Landau} based on computing the trace and the Hilbert-Schmidt norm of $\mathcal{Q}_c$.
\begin{proposition}
	Let $0<\delta<1$ and let $M_c(\delta)$ denote the number of eigenvalues $\lambda_k(c)$ not smaller than $\delta$. Then 
	$$ M_c(\delta) = \frac{c^d}{2^d} \frac{1}{\Gamma^2(\frac{d}{2}+1)} + o(c^d). $$
\end{proposition}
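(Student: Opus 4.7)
I follow the classical Landau trace and Hilbert--Schmidt scheme, adapted to the ball. Enumerate the eigenvalues of the positive self-adjoint trace class operator $\qq_c$ on $L^2(\B^d)$ (counted with multiplicity) as $\{\lambda_k(c)\}_k$ and set $T_1 := \Tr(\qq_c) = \sum_k \lambda_k(c)$, $T_2 := \Tr(\qq_c^2) = \sum_k \lambda_k(c)^2$. Since $0 < \lambda_k(c) < 1$, one has $T_1 - T_2 = \sum_k \lambda_k(c)(1-\lambda_k(c))$, which is the quantity measuring how many eigenvalues stay bounded away from both $0$ and $1$. The plan is to (i) compute $T_1$ exactly, (ii) show $T_1 - T_2 = o(c^d)$, and (iii) close with a Landau-style counting inequality.

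For (i), the integral kernel of $\qq_c$ is
$$K_c(\g{x},\g{y}) = \Bigl(\frac{c}{2\pi}\Bigr)^d (2\pi)^{d/2} \frac{J_{d/2}(c\norm{\g{x}-\g{y}})}{(c\norm{\g{x}-\g{y}})^{d/2}},$$
and \eqref{estim1} at $\nu = d/2$ yields the constant diagonal $K_c(\g{x},\g{x}) = c^d/[(4\pi)^{d/2}\Gamma(d/2+1)]$. Multiplying by the volume $\pi^{d/2}/\Gamma(d/2+1)$ of $\B^d$ gives $T_1 = c^d/[2^d\Gamma^2(d/2+1)]$, exactly the leading term announced.

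For (ii), let $D := D_{\B^d}$ and $B := B_c$, so that $\qq_c = DBD$ with $B^2 = B$ and $D^2 = D$. Cyclicity of the trace yields
$$T_1 - T_2 = \Tr(DB^2D) - \Tr\bigl((DBD)^2\bigr) = \norm{(I-D)BD}_{\mathrm{HS}}^2 = \int_{\R^d\setminus\B^d}\int_{\B^d} |K_B(\g{x},\g{y})|^2\, d\g{y}\, d\g{x},$$
with $K_B(\g{x},\g{y}) = c^d(2\pi)^{-d/2} J_{d/2}(c\norm{\g{x}-\g{y}})/(c\norm{\g{x}-\g{y}})^{d/2}$ the convolution kernel of $B_c$. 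I split the integration at $c\norm{\g{x}-\g{y}} = 1$. On the far zone, the large-argument bound $\sqrt{t}|J_{d/2}(t)| \leq c_{d/2}$ recalled in Section~2 gives $|K_B|^2 \lesssim c^{d-1}\norm{\g{x}-\g{y}}^{-(d+1)}$, and polar integration yields an $O(c^{d-1}\log c)$ contribution, the logarithm coming from a boundary layer of thickness $\sim 1/c$ near $\partial\B^d$. On the near zone $|K_B| \lesssim c^d$ and the region $\{\g{x}\notin\B^d,\,\g{y}\in\B^d,\,\norm{\g{x}-\g{y}} < 1/c\}$ has measure $O(c^{-d-1})$, contributing $O(c^{d-1})$. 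Altogether $T_1 - T_2 = O(c^{d-1}\log c) = o(c^d)$.

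For step (iii) and the main obstacle: fix $\delta \in (0,1)$ and write $M_c(\delta) - T_1 = \sum_{\lambda_k \geq \delta}(1-\lambda_k) - \sum_{\lambda_k < \delta} \lambda_k$; splitting the first sum at $1-\delta$ and applying the pointwise estimates $\lambda_k(1-\lambda_k) \geq \delta(1-\delta)$ on $[\delta,1-\delta]$, $\lambda_k \leq \lambda_k(1-\lambda_k)/(1-\delta)$ when $\lambda_k < \delta$, and $(1-\lambda_k) \leq \lambda_k(1-\lambda_k)/(1-\delta)$ when $\lambda_k > 1-\delta$, one obtains $|M_c(\delta) - T_1| \leq C_\delta(T_1 - T_2) = o(c^d)$; combined with (i) this proves the claim. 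The genuine difficulty lies entirely in (ii): a brute application of \eqref{estim1} alone would give only $T_1 - T_2 = O(c^d)$, which is worthless here, so one really must separate near and far zones and exploit the sharper decay $|J_{d/2}(t)| \lesssim t^{-1/2}$ at large $t$ to extract the missing $c^{-1}$ factor; the surviving $\log c$ is the standard boundary-layer cost and does not affect the required rate $o(c^d)$.
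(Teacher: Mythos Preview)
Your proof is correct and follows the same overall Landau trace/Hilbert--Schmidt scheme as the paper: compute $T_1=\Tr(\qq_c)$ exactly, show $T_1-T_2=o(c^d)$, and close with a counting inequality. Steps (i) and (iii) are essentially identical to the paper's (the paper phrases (iii) via Marzo's inequality together with the observation that the number of eigenvalues in any interval $[\delta,\gamma]\subset(0,1)$ is $o(c^d)$, but this is the same mechanism as your direct bound $|M_c(\delta)-T_1|\le C_\delta(T_1-T_2)$).

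The one genuine difference is step (ii). The paper does \emph{not} split into near and far zones; instead it makes the change of variables $\g{y}=\sigma$, $\g{x}=\sigma+\tau/c$ in the double integral for $T_2$, uses that $K(\cdot)=J_{d/2}(\norm{\cdot})/\norm{\cdot}^{d/2}\in L^2(\R^d)$ with $\int_{\R^d}|K|^2=\pi^{d/2}/\Gamma(\tfrac d2+1)$, and applies dominated convergence to get $T_2/c^d\to T_1/c^d$, i.e.\ $T_1-T_2=o(c^d)$ with no rate. Your approach rewrites $T_1-T_2=\norm{(I-D)BD}_{\mathrm{HS}}^2$ and estimates the kernel directly, obtaining the sharper quantitative bound $O(c^{d-1}\log c)$. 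The paper's argument is softer and shorter; yours is more work but recovers the expected Landau--Widom boundary rate, which is strictly more information than the statement requires.
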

\begin{proof}
We start by computing the trace of $\mathcal{Q}_c$ by using Mercer's theorem together with the fact that $\displaystyle K(0)=\frac{1}{2^{d/2}\Gamma(\frac{d}{2}+1)}$ and $\displaystyle \mu(\B^d) = \frac{\pi^{d/2}}{\Gamma(\frac{d}{2}+1)}$ (here $\mu$ denotes the Lebesgue measure), 
\begin{equation}\label{trace}
	\Tr(\mathcal{Q}_c) = \displaystyle \sum_n \lambda^{(m)}_n(c) = \Big(\frac{c}{\sqrt{2\pi}}\Big)^d \mu(\B^d) K(0) = \frac{c^d}{2^d\Gamma^2(\frac{d}{2}+1)}.
\end{equation}
Here $K(x):= \frac{J_{d/2}(\norm{x})}{(\norm{x})^{d/2}}$.
On the other hand, to derive an estimate of  $ \norm{\mathcal{Q}_c}_{HS}$, we proceed as follows
$$ \norm{\mathcal{Q}_c}_{HS} = \displaystyle \sum_n \Big(\lambda^{(m)}_n(c)\Big)^2 = \Big(\frac{c}{\sqrt{2\pi}}\Big)^{2d}\int_{\B^d}\int_{\B^d} |K(c\norm{\g{x}-\g{y}})|^2 d\g{x} d\g{y}. $$
By applying the change of variables $y=\sigma$ and  $x=\sigma+\frac{\tau}{c}$, one gets 
$$ \norm{\mathcal{Q}_c}_{HS} = \Big( \frac{c}{2\pi} \Big)^d \int_{\B^d} \Bigg( \int_{c(\B^d-\sigma)} |K({\tau})|^2 d\tau \Bigg) d\sigma. $$
Note that
\begin{eqnarray}
	\int_{c(\B^d-\sigma)} |K(\tau)|^2 d\tau &\leq& \int_{\R^d} |K(\tau)|^2 d\tau = \int_{\R^d} \frac{J_{d/2}^2(\norm{\tau})}{\norm{\tau}^d} d\tau \nonumber \\
	&=& \frac{2\pi^{d/2}}{\Gamma(d/2)} \int_{0}^{\infty} \frac{J^2_{d/2}(t)}{t} dt = \frac{\pi^{d/2}}{\Gamma(\frac{d}{2}+1)}.
\end{eqnarray}
The last equality is due to [\cite{NIST} eq 22.58 page 244].
Moreover, $\B^d-\sigma$ contains some $B(0,\alpha).$ Then, by Lebesgue's dominated convergence theorem, one has 
$ \displaystyle \lim_{c\to \infty} \frac{1}{c^d} \norm{\mathcal{Q}_c}_{HS} = \frac{1}{2^d \Gamma(\frac{d}{2}+1)} $. That is 
\begin{equation}\label{norm}
	\norm{\mathcal{Q}_c}_{HS} = \frac{c^d}{2^d \Gamma(\frac{d}{2}+1)} + o(c^d).
\end{equation}
Next, we notice that 
\begin{equation}\label{inf}
	\Tr(\mathcal{Q}_c) \geq \sum_{k=0}^{M_c(\delta)}\lambda_k \geq \delta M_c(\delta),
\end{equation}
and using Marzo's formula (see \cite{Marzo}), one gets
\begin{equation}\label{sup}
	M_c(\delta) \geq \Tr(\mathcal{Q}_c)-\frac{1}{1-\delta} \Big(\Tr(\mathcal{Q}_c)-\norm{\mathcal{Q}_c}_{HS}\Big).
\end{equation}
Let $\displaystyle M_+(\delta) := \limsup_{c\to \infty} \frac{M_c(\delta)}{c^d}$ and $\displaystyle M_-(\delta) := \liminf_{c\to \infty} \frac{M_c(\delta)}{c^d}$.\\
By combining together \eqref{trace}, \eqref{norm},\eqref{inf} and \eqref{sup}, one gets
\begin{equation}\label{enc}
	\frac{1}{2^d} \frac{1}{\Gamma^2(\frac{d}{2}+1)}  \leq M_- (\delta)\leq M_+(\delta) \leq \frac{1}{2^d} \frac{1}{\delta \Gamma^2(\frac{d}{2}+1)} \quad \forall 0<\delta<1 .
\end{equation}
 The next step is to prove that both $M_+$ and $M_-$ are independent of $\delta$ for all $0<\delta<1$. For this claim, we start by noticing that the number of eigenvalues not close to 0 or 1 is $o(c^d)$ by considering 
$$ J_c := \sum_{k=0}^\infty \lambda_k(c)(1-\lambda_k(c)) = \Tr(\mathcal{Q}_c) - \norm{\mathcal{Q}_c}_{HS} = o(c^d).  $$
Now let $\delta$ and $\gamma$ be two fixed reals such that $ 0<\delta<\gamma<1$. Taking into account that each eigenvalue $ \delta<\lambda_k(c) < \gamma$ has contribution to $J_c$ by at least $\delta(1-\gamma)$, one has 
$$\delta(1-\gamma) \left[ M_c(\delta) - Mc(\gamma) \right] \leq J_c = o(c^d)$$ and consequently $M_+$ and $M_-$ are both independent of $\delta$.
To conclude for the proof it suffices to choose $\delta$ near $1$ on the right side at \eqref{enc}.
\end{proof}
\subsection{Further basic properties}
We study here some other basic properties of the ball prolate spheroidal wave functions. \\
First, we give the bounds of the eigenvalues corresponding to the Sturm-Liouville operator. This lemma has been given in \cite{Wang3}. Here we prove it by other means.
\begin{lemma}
	For any positive real number $c$, we have 
	$$ (m+2k)(m+2k+d) \leq \chi^{(m)}_k(c) \leq (m+2k)(m+2k+d)+c^2; \qquad k\in \N, \quad m\in \N. $$
\end{lemma}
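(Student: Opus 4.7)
The natural approach is via the min--max (Courant--Fischer) principle for self-adjoint operators, exploiting the identity
$$ \lll_{c,\g{x}} = \lll_\g{x} + c^2\norm{\g{x}}^2, $$
where $\lll_\g{x}$ is the ball--polynomial Sturm--Liouville operator whose eigenvalues are given in \eqref{vp_poly}.

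First I would fix the angular indices $m$ and $\ell$ and restrict attention to the ``radial sector'' $\hh_{m,\ell}\subset L^2(\B^d)$ consisting of functions of the form $g(r)Y^m_\ell(\hat{\g{x}})$. Both $\lll_\g{x}$ and $\lll_{c,\g{x}}$ leave this sector invariant (they commute with the angular structure), they are self-adjoint on it, and the family $\{\jp\}_{j\ge0}$ forms an orthonormal eigenbasis of $\lll_\g{x}$ restricted to $\hh_{m,\ell}$ with eigenvalues $(m+2j)(m+2j+d)$, listed in strictly increasing order. Likewise, $\{\ps\}_{k\ge0}$ is an orthonormal eigenbasis of $\lll_{c,\g{x}}$ on $\hh_{m,\ell}$ with eigenvalues $\chi^{(m)}_k(c)$ in increasing order.

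Next, since $0\le\norm{\g{x}}^2\le1$ on $\B^d$, the multiplication by $c^2\norm{\g{x}}^2$ is a bounded positive operator with $0\le c^2\norm{\g{x}}^2\le c^2 I$. Consequently the associated quadratic forms satisfy, for every $f$ in the common form domain,
$$ \langle\lll_\g{x} f,f\rangle \;\le\;\langle\lll_{c,\g{x}} f,f\rangle \;\le\;\langle\lll_\g{x} f,f\rangle+c^2\norm{f}^2. $$
Then the lower bound follows immediately from the min--max characterisation
$$ \chi^{(m)}_k(c)=\inf_{\substack{V\subset\hh_{m,\ell}\\ \dim V=k+1}}\sup_{\substack{f\in V\\ \norm{f}=1}}\langle\lll_{c,\g{x}} f,f\rangle\;\ge\;\inf_V\sup_f\langle\lll_\g{x} f,f\rangle=(m+2k)(m+2k+d), $$
and the upper bound by the variational test subspace $V_k=\vect\{\jp\}_{j=0}^{k}$: any unit $f\in V_k$ satisfies $\langle\lll_\g{x} f,f\rangle\le (m+2k)(m+2k+d)$ and $\langle c^2\norm{\g{x}}^2 f,f\rangle\le c^2$, so that
$$ \chi^{(m)}_k(c)\le\sup_{f\in V_k,\,\norm{f}=1}\langle\lll_{c,\g{x}} f,f\rangle\le (m+2k)(m+2k+d)+c^2. $$

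This proof is essentially free of calculation; the only subtle point is to make sure that the min--max principle applies, \ie that $\lll_\g{x}$ and $\lll_{c,\g{x}}$ are interpreted as the Friedrichs self-adjoint extensions sharing a common form domain (the closure of the span of ball polynomials in $\hh_{m,\ell}$ for the natural energy norm). This is precisely the setting provided by the singular Sturm--Liouville form \eqref{diff-rad}, so no real obstacle arises.
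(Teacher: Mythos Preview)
Your proof is correct and follows essentially the same route as the paper: both use the decomposition $\lll_{c,\g{x}}=\lll_\g{x}+c^2\norm{\g{x}}^2$ together with the min--max principle and the bounds $0\le c^2\norm{\g{x}}^2\le c^2$ to sandwich $\chi^{(m)}_k(c)$ between $\chi^{(m)}_k(0)=(m+2k)(m+2k+d)$ and $\chi^{(m)}_k(0)+c^2$. You are somewhat more explicit (restricting to the invariant sector $\hh_{m,\ell}$, using a concrete test subspace for the upper bound, and flagging the form--domain issue), but the argument is the same.
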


\begin{proof}
	To get the required upper bound, we write the differential operator $\lll_{c,\g{x}}$ as 
	\begin{eqnarray*}
		\lll_{c,\g{x}}.u(x)&=&-\nabla(1-\norm{\g{x}}^2)\nabla.u(x)-\Delta_0.u(x)+c^2 \norm{\g{x}}^2u(x) \\
		&=& \lll_{0,\g{x}}.u(x)+c^2\norm{\g{x}}^2u(x)
	\end{eqnarray*}
	Then, by using the Courant-Fischer min-max theorem applied to the eigenvalues of the self-adjoint operator $\lll_{c,\g{x}}$, one gets 
	\begin{eqnarray*}
		\chi_k^{(m)}(c) &=& \min_{\dim H =k} \max_{u\in H ; \norm{u}=1} <\lll_{c,\g{x}}.u,u > \\
		&\leq& \min_{\dim H =k} \max_{u\in H ; \norm{u}=1} <\lll_{0,\g{x}}.u,u >+c^2\norm{u}^2 \\
		&\leq& \chi_k^{(m)}(0)+c^2.
	\end{eqnarray*}
On the other hand the lower bound follows from the fact that $\lll_{c,\g{x}}-\lll_{0,\g{x}}=c^2\norm{\g{x}}^2$ is a positive operator. Finally, to conclude for the proof of this lemma, it suffices to use the expression of $\chi_k(0)$given by \eqref{vp_poly}.
\end{proof}

The next proposition allows us to keep the fundamental property of double orthogonality already seen in the classical case. This is provided by computing the Fourier transform of the ball prolates spheroidal wave functions.

\begin{lemma}\label{Fourier_inv}
	The Fourier transform of the ball PSWFs are given by 
	$$ \displaystyle \ff.\ps(x) = \frac{(2\pi)^d}{c^d \sqrt{\lambda_k^{(m)}(c)}} \ps\big(\frac{-x}{c}\big) \chi_{(\B^d)} \big(\frac{x}{c}\big).$$
\end{lemma}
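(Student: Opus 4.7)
The plan is to first extend $\ps$ from $\B^d$ to all of $\R^d$ via the finite Fourier transform eigenequation, and then compute $\ff.\ps$ directly by interchanging the resulting double integral.

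Since $\ps$ is normalized on $\B^d$ and satisfies $\ff_c.\ps(\g{x}) = \mu_k^{(m)}(c)\,\ps(\g{x})$ for every $\g{x}\in\B^d$, the integral representation
\[
\ps(\g{x}) \;=\; \frac{1}{\mu_k^{(m)}(c)}\int_{\B^d} e^{-ic\langle\g{x},\g{y}\rangle}\,\ps(\g{y})\,d\g{y}
\]
serves as a natural extension of $\ps$ to every $\g{x}\in\R^d$; it agrees with the original $\ps$ on the ball and embeds it into the Paley--Wiener space $\mathcal{B}_c$. This is the higher-dimensional analogue of the standard trick used in the classical $d=1$ theory.

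Next, I would substitute this representation into $\ff.\ps(\g{\xi}) = (2\pi)^{-d/2}\int_{\R^d} e^{-i\langle\g{\xi},\g{x}\rangle}\ps(\g{x})\,d\g{x}$ and swap the order of integration. The inner integral over $\R^d$ then collapses to $(2\pi)^d\delta(\g{\xi}+c\g{y})$, and after using the rescaling identity $\delta(\g{\xi}+c\g{y}) = c^{-d}\delta(\g{y}+\g{\xi}/c)$ the outer integration against $\ps(\g{y})\,d\g{y}$ yields $c^{-d}\,\ps(-\g{\xi}/c)$ multiplied by the indicator that $-\g{\xi}/c\in\B^d$. Once constants are tracked this produces an expression of the form $(\textrm{const})\cdot\mu_k^{(m)}(c)^{-1}\,\ps(-\g{\xi}/c)\,\chi_{\B^d}(\g{\xi}/c)$, whose support already implements the band-limitation asserted in the statement.

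The final step is to replace $\mu_k^{(m)}(c)^{-1}$ by $\sqrt{\lambda_k^{(m)}(c)}^{\,-1}$ up to the appropriate power of $2\pi/c$, using the identity $\qq_c = (c/2\pi)^d \ff_c^*\ff_c$ established earlier, which gives $\lambda_k^{(m)}(c) = (c/2\pi)^d|\mu_k^{(m)}(c)|^2$. The principal technical obstacle is the rigorous justification of the Fubini interchange, since the inner integral $\int_{\R^d} e^{-i\langle \g{x},\g{w}\rangle}\,d\g{x}$ converges only in the distributional sense. I would handle this either by truncating to a growing family of balls and passing to the limit in the distributional pairing with the compactly supported function $\g{y}\mapsto\ps(\g{y})\chi_{\B^d}(\g{y})$, or, equivalently, by applying the Fourier inversion theorem to the $L^2(\R^d)$-function obtained from the extension above together with the characterization $\ps\in\mathcal{B}_c$.
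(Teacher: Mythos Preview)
Your proposal is correct and follows essentially the same route as the paper: extend $\ps$ to $\R^d$ via the eigenequation $\ps=\mu_k^{-1}\ff_c\ps$, insert this into $\ff.\ps$, and then recognize the resulting double integral as an instance of Fourier inversion applied to $\g{y}\mapsto\ps(-\g{y}/c)\chi_{\B^d}(\g{y}/c)$. The paper's write-up is terser and does not discuss the distributional justification or the passage from $\mu_k^{(m)}(c)$ to $\sqrt{\lambda_k^{(m)}(c)}$, both of which you handle more carefully.
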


\begin{proof}
	By the inverse Fourier transform, one has for $ f\in L^2(\R^d)$, 
	\begin{eqnarray}\label{inv_bis}
		f(\g{x}) &=& \frac{1}{\sqrt{(2\pi)^d}} \int_{\R^d} e^{i<x,z>} \ff.f(z) d\g{z} = \frac{1}{(2\pi)^d} \int_{\R^d} e^{i<x,z>} \int_{\R^d} e^{-i<z,y>} f(\g{y})d\g{y} d\g{z} \nonumber \\
		&=&  \frac{1}{(2\pi)^d}\int_{\R^d} \int_{\R^d} e^{-i<\g{z},(\g{y}-\g{x})>} f(\g{y}) d\g{y} d\g{z}.
	\end{eqnarray}
	On the other hand, from
	$$ \ps(\g{x}) = \frac{1}{\mu^{(m)}_k(c)} \int_{\B^d}e^{-ic<\g{x},\g{y}>} \ps(\g{y})d\g{y}, $$
	one gets 
	\begin{eqnarray}
	\ff.\ps(\g{x})&=&\frac{1}{\mu^{(m)}_k(c)}\int_{\R^d} \int_{\B^d} e^{-ic<\g{z},\g{y}>}\ps(\g{y})dy e^{-i<\g{z},\g{x}>} d\g{x} \nonumber \\
	&=& \frac{1}{\mu^{(m)}_k(c)}\int_{\R^d} \Bigg( \int_{\R^d} e^{-ic<\g{z},\g{y}-\g{x}>}\ps(\g{-y}/c)\chi_{(\B^d)}(\g{y}/c)d\g{y}\Bigg)  d\g{x} .
	\end{eqnarray}
To conclude for the proof it suffices to use the last equation together with \eqref{inv_bis}.
\end{proof}

\subsection{Some explicit estimates and bounds of eigenfunctions}
The purpose of this paragraph is to give an explicit upper bound of the ball prolate spheroidal wave functions. To do so, we start by showing that under some conditions, these functions reach their maximum on the unit sphere $\S^{d-1}$.
Let $c>0$, we recall that 
$$ \ps(\g{x}) = r^m \phi_k^{(m,c)}(\eta) Y_\ell^{(m)}(\hat{\g{x}}), \quad \g{x} = r\hat{\g{x}}, \quad r=\norm{\g{x}}, \quad \eta:=2r^2-1. $$ 

\begin{lemma}
	Let $\alpha^{(m)}_k(c) := \frac{1}{4}\left( \chi^{(m)}_k(c)-m(m+d) \right) $ and $c$ be a positive real number. If $\alpha^{(m)}_k(c) > \frac{c^2}{4}$, then we have 
	\begin{equation}
		\sup_{\eta \in [a_{m,d},1]} |\phi^{(m)}_k(\eta) | = |\phi^{(m)}_k(1)| \qquad a_{m,d} = \frac{2m+d-2}{2m+d}.
	\end{equation}
\end{lemma}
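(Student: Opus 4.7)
The strategy is a Sturm--Liouville auxiliary-function argument. Writing the radial equation in the form $(p\phi')' = (q-\alpha)w\phi$, with $p = \omega_{1,\beta_{m,d}+1}$, $w = \omega_{0,\beta_{m,d}}$, $q(\eta) = c^2(\eta+1)/8$ and $\alpha = \alpha_k^{(m)}$, I would introduce
\[
W(\eta) \;=\; \phi^2(\eta) + A(\eta)\bigl(\phi'(\eta)\bigr)^2,
\]
with $A$ chosen so that (i) $A(1)=0$, giving $W(1)=\phi^2(1)$; (ii) $A\ge 0$ on $[a_{m,d},1]$; and (iii) $W$ is non-decreasing on $[a_{m,d},1]$. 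Granted (i)--(iii), the chain $\phi^2(\eta)\le W(\eta)\le W(1)=\phi^2(1)$ yields the lemma.

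Differentiating $W$ and using the ODE to replace $\phi''$ gives
\[
W'(\eta) \;=\; 2\phi\phi'\Bigl[1+\tfrac{A(q-\alpha)w}{p}\Bigr] + (\phi')^2\Bigl[A' - \tfrac{2Ap'}{p}\Bigr];
\]
the cross term in $\phi\phi'$ vanishes precisely when $A(\eta):=p(\eta)/\bigl[(\alpha-q(\eta))w(\eta)\bigr]$. Under the hypothesis $\alpha>c^2/4$ one has $\alpha-q(\eta)\geq \alpha-c^2/4>0$ on $[-1,1]$, so this $A$ is positive on $[a_{m,d},1)$ and vanishes at $\eta=1$ since $p(1)=0$, securing (i) and (ii). A short further simplification (using $A'/A = p'/p - [(\alpha-q)w]'/[(\alpha-q)w]$) compresses $W'$ into
\[
W'(\eta) \;=\; -\bigl(\phi'(\eta)\bigr)^2\,\frac{\bigl[p(\alpha-q)w\bigr]'(\eta)}{\bigl[(\alpha-q(\eta))w(\eta)\bigr]^2},
\]
and the lemma is thereby reduced to the scalar inequality $[p(\alpha-q)w]'(\eta)\le 0$ on $[a_{m,d},1]$.

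For this last step I would use that $\beta_{m,d} = m+d/2-1$, so $pw=(1-\eta)(1+\eta)^{B}$ with $B=2m+d-1$, and a direct differentiation yields
\[
\bigl[p(\alpha-q)w\bigr]'(\eta) \;=\; (1+\eta)^{B-1}\Bigl\{\bigl[(B-1)-(B+1)\eta\bigr]\bigl(\alpha-q(\eta)\bigr) - \tfrac{c^2}{8}\bigl(1-\eta^2\bigr)\Bigr\}.
\]
The key structural observation is that $a_{m,d}=(2m+d-2)/(2m+d)$ equals $(B-1)/(B+1)$, which is exactly the zero of the first bracket. Hence on $[a_{m,d},1]$ one has $(B-1)-(B+1)\eta\le 0$, and combined with $\alpha-q(\eta)>0$ and $1-\eta^2\ge 0$, both summands inside the braces are $\le 0$. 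Therefore $[p(\alpha-q)w]'\le 0$ throughout $[a_{m,d},1]$, which gives $W'\ge 0$ and completes the proof.

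The only substantive step is guessing the auxiliary $A$; the rest is bookkeeping. It is also this choice which explains transparently why the threshold $a_{m,d}$ appears: it is precisely the smallest $\eta$ at which the mere hypothesis $\alpha>c^2/4$ suffices to force the monotonicity of $p(\alpha-q)w$.
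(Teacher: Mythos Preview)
Your proposal is correct and is essentially identical to the paper's proof. The paper writes the equation as $(p\phi_k')'+q_k\phi_k=0$ with $q_k(\eta)=(\alpha_k-c^2(1+\eta)/8)(1+\eta)^{m+d/2-1}$, introduces the same auxiliary function $Z_k=\phi_k^2+\tfrac{p}{q_k}(\phi_k')^2$ (your $W$ with $A=p/[(\alpha-q)w]$, since $q_k=(\alpha-q)w$), computes $Z_k'=-\tfrac{(pq_k)'}{q_k^2}(\phi_k')^2$, and verifies $(pq_k)'\le 0$ on $[a_{m,d},1]$ via exactly the factorization you give, with $a_{m,d}$ appearing as the zero of the linear factor $(B-1)-(B+1)\eta$.
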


\begin{proof}
	For the sake of clarity, we write, throughout the proofs of this section, $\phi_k$ and $\alpha_k$ instead of $\phi^{(m,c)}_k$ and $\alpha^{(m)}_k(c)$ respectively. Hence \eqref{diff-rad} can be written under the form 
	\begin{equation}\label{gen_form}
	\left( p(\eta) \phi_k'(\eta) \right)' + q_k(\eta) \phi_k(\eta) = 0;  \quad p(\eta) = (1-\eta)(1+\eta)^{m+\frac{d}{2}}, \; q_k(\eta) = \alpha_k\left( 1- \frac{c^2(1+\eta)}{8\alpha_k}\right)\left(1+\eta\right)^{m+\frac{d}{2}-1}.
	\end{equation}
	Straightforward computations show that the auxiliary function $Z_k(\eta):= \phi_k^2(\eta) + \frac{p(\eta)}{q_k(\eta)}\phi_k'^2(\eta) $ admits a first order derivative only on $\phi_k'^2$, given by
	$$ Z'_k(\eta) = -\frac{1}{q^2_k(\eta)}\left(p(\eta)q_k(\eta)\right)' \phi_k'^2(\eta) .$$
	Then, in our case
	$$ Z'_k(\eta) = -\frac{1}{q^2_k(\eta)}(1+\eta)^{2m+d-2}\Bigg[ \Big(\alpha_k-\frac{c^2(1+\eta)}{8}\Big) \Big(2m+d-2-t(2m+d)\Big) - \frac{c^2}{8} (1-\eta^2) \Bigg] \phi_k'^2(\eta). $$
	To conclude for the proof, it suffices to note that under the condition $\alpha^{(m)}_k(c) > \frac{c^2}{4}$, $Z_k$ is increasing over $[a_{m,d},1]$ and $$\Big|\phi_k^2(\eta)\Big| \leq Z_k(1) \leq \Big|\phi_k^2(1)\Big|. $$
\end{proof}
\begin{lemma}
	Under conditions of the previous lemma, we have 
	\begin{equation}\label{local_estimate}
		\sup_{\eta \in [a_{m,d},1]} \sqrt{(1-\eta)(1+\eta)^{m+\frac{d}{2}}} \big|\phi_k(\eta)\big| \leq \sqrt{2^{m+\frac{d}{2}+1}(m+\frac{d}{2}-1)}.
	\end{equation}
\end{lemma}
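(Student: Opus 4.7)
The strategy mirrors the construction of $Z_k$ in the previous lemma, but multiplied by $p(\eta)$. Introduce the auxiliary function
\[
V_k(\eta) := p(\eta)\,Z_k(\eta) = (1-\eta)(1+\eta)^{m+d/2}\phi_k^{2}(\eta) + \frac{p^{2}(\eta)}{q_k(\eta)}\bigl(\phi_k'(\eta)\bigr)^{2},
\]
so that $V_k(\eta)$ dominates $(1-\eta)(1+\eta)^{m+d/2}\phi_k^{2}(\eta)$ pointwise and satisfies the convenient boundary condition $V_k(1)=0$ (because $p(1)=0$). Thus it suffices to bound $V_k$ uniformly on $[a_{m,d},1]$ by the constant $2^{m+d/2+1}(m+d/2-1)$.

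A direct computation, substituting $(p\phi_k')'=-q_k\phi_k$, yields the clean expression
\[
V_k'(\eta) = p'(\eta)\,\phi_k^{2}(\eta)\;-\;\frac{p^{2}(\eta)\,q_k'(\eta)}{q_k^{2}(\eta)}\bigl(\phi_k'(\eta)\bigr)^{2}.
\]
The first step is to check the sign of $V_k'$ on $[a_{m,d},1]$. Writing $p'(\eta)=(1+\eta)^{m+d/2-1}\bigl[(m+d/2-1)-(m+d/2+1)\eta\bigr]$ shows $p'\le 0$ on $[a_{m,d},1]$ (the zero of $p'$ occurs at $(m+d/2-1)/(m+d/2+1)<a_{m,d}$). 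Under the standing hypothesis $\alpha_k^{(m)}>c^{2}/4$ one can likewise verify $q_k'\ge 0$ on the same interval, so both summands in $V_k'$ are $\le 0$; hence $V_k$ is non-increasing with $V_k(1)=0$, and we may write $V_k(\eta)=\int_\eta^{1}(-V_k')\,ds$ as a sum of two non-negative integrals.

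The heart of the argument is then to estimate these integrals using the $L^{2}$ normalization $\int_{-1}^{1}(1+t)^{m+d/2-1}\phi_k^{2}(t)\,dt = 2^{m+d/2-1}$ together with the energy identity obtained by multiplying the Sturm--Liouville equation by $\phi_k$ and integrating, namely $\int p(\phi_k')^{2}\,d\eta = \int q_k\phi_k^{2}\,d\eta$. Factoring $-p'(s) = (1+s)^{m+d/2-1}\bigl[(m+d/2+1)s-(m+d/2-1)\bigr]$ with the bracket bounded by $2$ yields a clean estimate for the first piece in terms of the normalization constant, and the $(\phi_k')^{2}$ piece is absorbed similarly. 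Collecting these two contributions is designed to produce precisely the factor $4(m+d/2-1)\cdot 2^{m+d/2-1} = 2^{m+d/2+1}(m+d/2-1)$.

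The main technical obstacle is the exact constant: while the sign structure of $V_k'$ makes the qualitative bound transparent, matching the numerical constant $2^{m+d/2+1}(m+d/2-1)$ requires carefully balancing the two integrands arising in $-V_k'$ and choosing the right integration weights, and is sensitive to the precise use of the normalization and the identity $\int p(\phi_k')^{2} = \int q_k\phi_k^{2}$. A secondary subtlety is that positivity of $q_k'$ on $[a_{m,d},1]$ relies on $m+d/2\ge 1$, which is implicit in the regime where the right-hand side is non-trivial.
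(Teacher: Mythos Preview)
Your auxiliary function $V_k=pZ_k$ and the formula $V_k'=p'\phi_k^{2}-\dfrac{p^{2}q_k'}{q_k^{2}}(\phi_k')^{2}$ coincide exactly with the paper's $K_k$ and $K_k'$; the framework is the same. The divergence, and the gap in your plan, is in the treatment of the $(\phi_k')^{2}$ term. With $\beta=m+d/2$ one computes
\[
q_k'(\eta)=(1+\eta)^{\beta-2}\Bigl[(\beta-1)\alpha_k-\tfrac{\beta c^{2}}{8}(1+\eta)\Bigr],
\]
so at $\eta=1$ the bracket is $(\beta-1)\alpha_k-\beta c^{2}/4$, which is \emph{negative} whenever $c^{2}/4<\alpha_k<\tfrac{\beta}{\beta-1}\,\tfrac{c^{2}}{4}$. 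Hence your assertion that $q_k'\ge 0$ on $[a_{m,d},1]$ does not follow from the hypothesis $\alpha_k>c^{2}/4$, and the subsequent programme of bounding the $(\phi_k')^{2}$ integral through the energy identity $\int p(\phi_k')^{2}=\int q_k\phi_k^{2}$ is both unjustified at this step and, as you acknowledge, unable to deliver the stated constant.

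The paper bypasses all of this. It does not retain the $(\phi_k')^{2}$ term and estimate it; it argues instead that this term has the favourable sign and records the one-line lower bound $K_k'(\eta)\ge p'(\eta)\phi_k^{2}(\eta)$. Since $K_k(1)=0$, integrating from $\eta$ to $1$ gives $K_k(\eta)\le\int_{\eta}^{1}(-p')\phi_k^{2}\,dt$, and because $-p'(t)=(1+t)^{\beta-1}\bigl[(\beta+1)t-(\beta-1)\bigr]$ with bracket bounded by $2$ on $[a_{m,d},1]$, this is dominated by twice the normalization integral $\int_{-1}^{1}(1+t)^{\beta-1}\phi_k^{2}\,dt$. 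No energy identity enters, and the constant is read off directly rather than being the ``main technical obstacle''.
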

\begin{proof}
	The proof of this lemma is based on a fairly well known technique for the Sturm-Liouville theory consisting on the use of the auxiliary function defined by $K_k(\eta)=p(\eta)Z_k(\eta)$. \\
	Straightforward computations show that 
	$$ K'_k(\eta) = p'(\eta)\phi_k^2(\eta) - \frac{p^2(\eta)q'(\eta)}{q^2(\eta)} (\phi'_k)^2(\eta). $$
	Then, since  $\alpha_k > \frac{c^2}{4}$ and $\eta\geq \frac{2m+d-2}{2m+d}$, one has
	$$ K'_k(\eta) \geq (1+\eta)^{m+\frac{d}{2}-1} \Big[\big(m+\frac{d}{2}-1\big)-\big(m+\frac{d}{2}+1\big)\eta\Big]\phi_k^2(\eta).$$
	Hence, taking into account that $\eta<1$ together with the normalization of $\phi_k$,
	$$ K_k(\eta) = K_k(\eta)-K_k(1) \leq 2\int_{-1}^{1}(1+t)^{m+\frac{d}{2}-1}\phi_k^2(t)dt = 2^{m+\frac{d}{2}+1} (m+\frac{d}{2}-1). $$
	Finally, one has
	$$ (1-\eta)(1+\eta)^{m+\frac{d}{2}} \left| \phi_k(\eta) \right|^2 \leq K_k(\eta) \leq 2^{m+\frac{d}{2}+1} (m+\frac{d}{2}-1). $$
\end{proof}
\begin{proposition}
	Let $c$ be a real positive number. If $\alpha^{(m)}_k(c)> \frac{c^2}{4}$, then 
	\begin{equation}
		\sup_{\eta \in [a_{m,d},1]} \big|\phi_k(\eta) \big| \leq \frac{3\sqrt{3}}{2}\sqrt{2^{m+\frac{d}{2}+1}(m+\frac{d}{2}-1)} \big(\alpha^{(c)}_{k,m}\big)^{1/2}
	\end{equation}
\end{proposition}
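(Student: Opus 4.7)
The plan is to bound $V:=\phi_k^2(1)$ — which equals $\sup_{[a_{m,d},1]}|\phi_k|^2$ by the first lemma of this subsection — by combining the pointwise control $p(\eta)Z_k(\eta)\leq M^2$ with $M^2:=2^{m+d/2+1}(m+d/2-1)$ (from the previous lemma) with a Gronwall/$\arcsin$ argument on the radial Sturm--Liouville ODE.

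Since $p(1)=0$ we have $Z_k(1)=V$, and the proof of the first lemma gives that $Z_k$ is nondecreasing on $[a_{m,d},1]$, hence $Z_k(s)\leq V$ there. Via the identity $Z_k=\phi_k^2+(p/q_k)(\phi_k')^2$ this yields the differential inequality $(\phi_k'(s))^2\leq(q_k(s)/p(s))(V-\phi_k^2(s))$. Normalizing $\psi:=\phi_k/\sqrt{V}$ (choosing a sign so that $\psi(1)=1$), this reads $|(\arcsin\psi)'(s)|\leq\sqrt{q_k(s)/p(s)}$, and integration on $[\eta_0,1]$ gives $\pi/2-\arcsin\psi(\eta_0)\leq\Gamma(\eta_0):=\int_{\eta_0}^1\sqrt{q_k(s)/p(s)}\,ds$, equivalently $|\phi_k(\eta_0)|\geq\sqrt{V}\cos\Gamma(\eta_0)$.

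Two elementary estimates then close the argument. Dropping the factor $(1-c^2(1+s)/(8\alpha_k))\leq 1$ inside $q_k$ and using $(1+s)\geq 1+\eta_0$ inside $p$, one obtains $\Gamma(\eta_0)\leq 2\sqrt{\alpha_k(1-\eta_0)/(1+\eta_0)}\leq\sqrt{2\alpha_k(1-\eta_0)}$ for $\eta_0$ near $1$. From the previous lemma, $|\phi_k(\eta_0)|\leq M/\sqrt{p(\eta_0)}\leq M/\sqrt{1-\eta_0}$ (using $(1+\eta_0)^{m+d/2}\geq 1$). Parametrizing by $\Gamma:=\sqrt{2\alpha_k(1-\eta_0)}$ so $\sqrt{1-\eta_0}=\Gamma/\sqrt{2\alpha_k}$, these combine into $\sqrt{V}\leq M\sqrt{2\alpha_k}/(\Gamma\cos\Gamma)$. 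Applying the universal bound $\cos\Gamma\geq 1-\Gamma^2/2$ and maximizing $\Gamma-\Gamma^3/2$ over $(0,\sqrt{2})$ — with critical point $\Gamma=\sqrt{2/3}$ and maximum value $2\sqrt{6}/9$ — delivers $\sqrt{V}\leq(3\sqrt{6}/4)\,M\sqrt{2\alpha_k}=(3\sqrt{3}/2)\,M\sqrt{\alpha_k}$, which is the claimed bound.

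The main obstacle is identifying the combination of estimates that yields the precise constant $3\sqrt{3}/2$. The naive Gronwall-type inequality $\sqrt{V}(1-\Gamma)\leq|\phi_k(\eta_0)|$ produces only the weaker constant $4\sqrt{2}$, and even the sharper Cauchy--Schwarz version $\sqrt{V}\sqrt{1-\Gamma^2}\leq|\phi_k(\eta_0)|$ produces only $2\sqrt{2}$; the $\arcsin$-substitution, coupled with the quadratic lower bound $\cos\Gamma\geq 1-\Gamma^2/2$ (strictly tighter than $\sqrt{1-\Gamma^2}$ on $(0,1)$) is what pushes the optimizer to $\Gamma=\sqrt{2/3}$ and lands on the sharp constant. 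A small final check is that the corresponding $\eta_0=1-1/(3\alpha_k)$ lies in $[a_{m,d},1]$, i.e.\ that $\alpha_k\geq(2m+d)/6$, which is easily secured under the spectral hypotheses already used.
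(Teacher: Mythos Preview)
Your approach is genuinely different from the paper's. The paper integrates the ODE once to write $\phi_k'(\eta)$ as an integral over $[\eta,1]$, bounds the integrand using $\phi_k(t)\leq\phi_k(1)$ (from the first lemma), integrates again to obtain $\phi_k(1)-\phi_k(\eta)\leq\alpha_k\phi_k(1)\big(1-\tfrac{c^2(\eta+1)}{8\alpha_k}\big)(1-\eta)$, picks $\eta=x_k$ so that the right side equals $A\,\phi_k(1)$, invokes the local estimate \eqref{local_estimate} at $x_k$, and finally optimizes $A^{1/2}(1-A)$. You instead extract a first-order differential inequality directly from the monotonicity of $Z_k$ and integrate it via an $\arcsin$ substitution --- a Pr\"ufer-type argument that is conceptually clean and, when carried out correctly, can in fact beat the constant $3\sqrt{3}/2$.

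There is, however, a real gap. Your second inequality
\[
\Gamma(\eta_0)\ \leq\ 2\sqrt{\frac{\alpha_k(1-\eta_0)}{1+\eta_0}}\ \leq\ \sqrt{2\alpha_k(1-\eta_0)}
\]
is equivalent to $1+\eta_0\geq 2$ and is therefore false for every $\eta_0<1$; the parametrization $\Gamma:=\sqrt{2\alpha_k(1-\eta_0)}$ does \emph{not} dominate the true integral $\Gamma(\eta_0)$, so the optimization that follows does not establish the stated constant. The fix is to keep the $(1+\eta_0)$ factor rather than discard it: set $\tilde\Gamma:=2\sqrt{\alpha_k(1-\eta_0)/(1+\eta_0)}$ and, symmetrically, retain one power of $(1+\eta_0)$ in the lower bound for $p(\eta_0)$, using $p(\eta_0)\geq(1-\eta_0)(1+\eta_0)$ (valid since $m+\tfrac{d}{2}\geq 1$). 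One then gets $\sqrt{V}\leq 2M\sqrt{\alpha_k}\big/\big[(1+\eta_0)\,\tilde\Gamma\cos\tilde\Gamma\big]$; at the optimizer $\tilde\Gamma=\sqrt{2/3}$ the corresponding $\eta_0$ is within $O(1/\alpha_k)$ of $1$, so $(1+\eta_0)$ is close enough to $2$ that the bound $(3\sqrt{3}/2)M\sqrt{\alpha_k}$ is recovered under a mild spectral lower bound of the same type you already invoke in your final check.
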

\begin{proof}
	Without loss of generality, one may assume that $\ps(1)>0$. By \eqref{diff-rad}, one has
	$$\Big((1-\eta^2)(1+\eta)^{m+\frac{d}{2}-1} \phi_k'(\eta)\Big)'=-\alpha^{(c)}_{km} \Big(1-\frac{c^2(\eta+1)}{8\alpha^{(c)}_{km}}\Big) (1+\eta)^{m+\frac{d}{2}-1}\phi_k(\eta).$$
	Then, by integrating over $[x,1]$, one gets 
	$$ \phi_k'(\eta) = \frac{\alpha_{k}}{(1-\eta^2)(1+\eta)^{m+\frac{d}{2}-1}} \int_{\eta}^{1} (1+t)^{m+\frac{d}{2}-1}\Big(1-\frac{c^2(t+1)}{8\alpha_{k}}\Big) \phi_k(t)dt .$$
	Straightforward computations show that 
	$$ q_k'(\eta) = (1+\eta)^{m+\frac{d}{2}}\Big[\alpha_{k}(m+\frac{d}{2}-1)-\frac{c^2}{8}(m+\frac{d}{2}) - \frac{c^2}{8}(m+\frac{d}{2})\eta \Big] $$
	Thus, for $\alpha_{k}>\frac{c^2}{4}$, $q$ is decreasing over $[a_{m,d},1]$ and
$$ \phi_k'(\eta) \leq \frac{\alpha_{k}}{1-\eta^2} \Big(1-\frac{c^2(\eta+1)}{8\alpha_{k}}\Big)(1-\eta)\phi(1) =  \alpha_{k} \Big(1-\frac{c^2(\eta+1)}{8\alpha_{k}}\Big)\phi_k(1).  $$
Then, 
$$ \phi_k(1)-\phi_k(\eta) \leq \alpha_{k}\phi_k(1) \Big(1-\frac{c^2(\eta+1)}{8\alpha_{k}}\Big)(1-\eta) .  $$
Let $x_k$ be in a neighbourhood of 1 such that $\Big(1-\frac{c^2(x_k+1)}{8\alpha_{k}}\Big){(1-x_k)} = \frac{A}{\alpha_{k}}$. Consequently,
$$ \phi_k(1) \leq \frac{1}{1-A} \frac{\sqrt{2^{m+\frac{d}{2}+1}\left(m+\frac{d}{2}-1\right)}}{\sqrt{(1-x_n)(1+x_n)^{m+d/2}}} \leq \frac{\sqrt{2^{m+\frac{d}{2}+1}\left(m+\frac{d}{2}-1\right)}}{(1-A)A^{1/2}}(\alpha_{k})^{1/2}. $$
One concludes for the proof by noticing that $ \displaystyle \min_{A} \frac{1}{A^{1/2}(1-A)} = \frac{3\sqrt{3}}{2}$.
 
\end{proof}

\begin{theorem}\label{max}
	Let $c>0$ and $\alpha^{(m)}_k(c)> \max \{\frac{c^2+8}{4(2m+d)}, \left(2/3\right)^6 \left(\frac{\pi}{m+\frac{d}{2}-1}\right)^2+C_{m,d}(c) \}$. Then 
	$$ \max_{\g{x}\in \B^d} \left| \ps(\g{x}) \right| \leq \frac{3\sqrt{3}}{2}\sqrt{2^{m+\frac{d}{2}+1}(m+\frac{d}{2}-1)} \sqrt{\frac{N(d,m)}{\Omega_{d-1}}}. \sqrt{\chi^{(m)}_k(c)}, $$
	with $ C_{m,d}(c) = \frac{c^2}{4}+(m+\frac{d}{2})(m+\frac{d}{2}-1)-1.$
\end{theorem}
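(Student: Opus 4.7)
The plan is to reduce to a one-dimensional estimate on the radial factor $\phi_k^{(m,c)}$ of the separated form \eqref{sep_form}, and then combine the preceding proposition---which handles the interval $[a_{m,d},1]$---with a Sturm--Liouville energy argument to cover $[-1,a_{m,d}]$. First, using $\ps(\g{x})=r^{m}\phi_k^{(m,c)}(2r^{2}-1)Y_{\ell}^{m}(\hat{\g{x}})$ together with $r=\norm{\g{x}}\le 1$ on $\B^{d}$ and the spherical-harmonic bound $|Y_{\ell}^{m}|\le\sqrt{N(d,m)/\Omega_{d-1}}$ of \eqref{max_Y}, everything reduces to proving
\[
\sup_{\eta\in[-1,1]}|\phi_k^{(m,c)}(\eta)|\;\le\;\tfrac{3\sqrt{3}}{2}\sqrt{2^{m+d/2+1}(m+d/2-1)}\,\sqrt{\chi_k^{(m)}(c)}.
\]
Since $\chi_k^{(m)}(c)=4\alpha_k^{(m)}+m(m+d)\ge 4\alpha_k^{(m)}$, the target bound exceeds the quantity $\tfrac{3\sqrt{3}}{2}\sqrt{2^{m+d/2+1}(m+d/2-1)}\sqrt{\alpha_k^{(m)}}$ by at least a factor of two, which will absorb the endpoint analysis below.

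The first threshold $\alpha_k^{(m)}>(c^{2}+8)/(4(2m+d))$ in particular implies $\alpha_k^{(m)}>c^{2}/4$, so the proposition stated immediately before the theorem yields, without further work, the required bound on $[a_{m,d},1]$. To extend this to $[-1,a_{m,d}]$, I revisit the energy $Z_k(\eta)=\phi_k^{2}(\eta)+(p(\eta)/q_k(\eta))\phi_k'^{\,2}(\eta)$ already used earlier. Its derivative $Z_k'(\eta)=-(pq_k)'(\eta)\phi_k'^{\,2}(\eta)/q_k^{2}(\eta)$, combined with the explicit formula for the bracket appearing in the proof of the first lemma of this subsection, shows that under the second threshold on $\alpha_k^{(m)}$ the sign of $Z_k'$ is controlled on $[-1,a_{m,d}]$, yielding $\phi_k^{2}(\eta)\le Z_k(\eta)\le \max(Z_k(-1),Z_k(a_{m,d}))$. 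The second term is already dominated by the previous step; the remaining task is to handle $Z_k(-1)=\phi_k^{2}(-1)$.

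The endpoint $\eta=-1$ corresponds to $r=0$, where $\phi_k$ is regular. To bound $|\phi_k(-1)|$, I appeal to the Hankel-type identity of Remark~\ref{connection_d2}: the rescaled $\varphi_k^{(m,c)}(r)=r^{m+(d-1)/2}\phi_k^{(m,c)}(2r^{2}-1)$ satisfies
\[
\alpha_k^{(m)}\,\varphi_k^{(m,c)}(r)=\int_{0}^{1}\sqrt{cr\tau}\,J_{m+d/2-1}(cr\tau)\,\varphi_k^{(m,c)}(\tau)\,d\tau,\qquad \|\varphi_k^{(m,c)}\|_{L^{2}(0,1)}=1.
\]
Expanding $J_{m+d/2-1}(cr\tau)$ via \eqref{estim1} around $r=0$ and applying Cauchy--Schwarz gives an explicit $O(1/\alpha_k^{(m)})$ bound on $|\phi_k(-1)|=\lim_{r\to 0}\varphi_k(r)/r^{m+(d-1)/2}$ in terms of $c,m,d$. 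The second threshold $\alpha_k^{(m)}>(2/3)^{6}\pi^{2}/(m+d/2-1)^{2}+C_{m,d}(c)$ is calibrated precisely so that this $O(1/\alpha_k^{(m)})$ estimate is dominated by $\sqrt{\alpha_k^{(m)}}$ times the constant from the previous step. Assembling the two interval bounds, multiplying by $\sqrt{N(d,m)/\Omega_{d-1}}$ and replacing $\sqrt{\alpha_k^{(m)}}$ by $\tfrac12\sqrt{\chi_k^{(m)}(c)}$ delivers the theorem.

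The main technical obstacle is the passage to $[-1,a_{m,d}]$: the $c^{2}$-perturbation in $q_k$ slightly displaces the sign change of $(pq_k)'$ away from the natural threshold $\eta=a_{m,d}$, so tracking the monotonicity sectors of $Z_k$ and gluing them to the local weighted estimate \eqref{local_estimate} requires careful bookkeeping. The explicit numerical constants in the second threshold arise from optimizing over the interior near-maximizer of $|\phi_k|$, in direct analogy with the minimization $\min_{A}1/(A^{1/2}(1-A))=3\sqrt{3}/2$ that produced the prefactor in the previous proposition.
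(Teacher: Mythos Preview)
Your reduction to a one-variable bound and the handling of $[a_{m,d},1]$ via the preceding proposition match the paper, and your observation that $Z_k$ is U-shaped on $[-1,a_{m,d}]$ (so that $\phi_k^{2}\le\max\{Z_k(-1),Z_k(a_{m,d})\}$, with $Z_k(a_{m,d})\le Z_k(1)=\phi_k^{2}(1)$ by the monotonicity already established) is correct. The genuine gap is your bound on $|\phi_k(-1)|$. The symbol $\alpha_k^{(m)}$ in the Hankel identity of Remark~\ref{connection_d2} is the \emph{integral} eigenvalue $\sqrt{c}\,(c/2\pi)^{d/2}\lambda_k^{(m)}(c)$, an entirely different object from the differential quantity $\alpha_k^{(m)}=\tfrac14\big(\chi_k^{(m)}(c)-m(m+d)\big)$ that appears in the hypothesis of the theorem: the former decays super-exponentially in $k$ (Proposition~3.2), while the latter grows like $k^{2}$. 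Consequently your ``$O(1/\alpha_k^{(m)})$'' estimate on $|\phi_k(-1)|$ carries the reciprocal of the integral eigenvalue and is exponentially \emph{large}; no polynomial threshold on the differential $\alpha_k^{(m)}$ can absorb it. The appeal to Remark~\ref{connection_d2} therefore does not control the left endpoint, and the argument does not close.

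The paper never evaluates $\phi_k$ at $\eta=-1$. Instead it invokes Butlowski's theorem (the local maxima of $|\phi_k|$ decrease where $pq_k$ increases and increase where $pq_k$ decreases) to reduce the global maximum of $|\phi_k|$ to either the last local extremum---covered by the preceding proposition---or the \emph{first} local extremum $\eta'_{k,1}$. To locate $\eta'_{k,1}$ it performs the Liouville substitution $U=p^{1/2}\phi_k$, bounds the resulting potential from below by $\alpha_k^{(m)}-C_{m,d}(c)$, and applies Sturm comparison against $V''+\big(\alpha_k^{(m)}-C_{m,d}(c)\big)V=0$ to obtain $\eta'_{k,1}\le b_k:=2\pi/\sqrt{\alpha_k^{(m)}-C_{m,d}(c)}$. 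Integrating the ODE over $[\eta,\eta'_{k,1}]$ and applying H\"older together with the $L^{2}$-normalisation of $\phi_k$ then gives $|\phi_k(\eta'_{k,1})|\le\sqrt{2^{m+d/2-1}}\,b_k^{3/2}(1-b_k)^{-1}\alpha_k^{(m)}$; the second threshold in the hypothesis is precisely what forces $b_k$ small enough to turn this into the stated $\tfrac{3\sqrt3}{2}\sqrt{\alpha_k^{(m)}}$ bound. This is a purely ODE argument and never touches the integral eigenvalue.
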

\begin{proof}
We start by recalling Butlowski's theorem concerning the behavior of the local extrema of the solution of a second order differential equation (see \cite{An} p238) : \\
	 If $\phi$ is a solution of the differential equation 
	$$ (p(t)y'(t))'+q(t)y(t)=0 \qquad t\in(a,b) \qquad p,q>0 \mbox{ and }p,q\in C^1(a,b), $$
	then the sequence of local maximum of $|\phi|$ is decreasing (respectively increasing) if $pq$ is increasing(respectively decreasing).

In our case, we recall that 
$$ \Big(p(\eta)q_k(\eta)\Big)' = (1+\eta)^{2m+d-2}\Bigg[ \Big(\alpha_k-\frac{c^2(1+\eta)}{8}\Big) \Big(2m+d-2-\eta(2m+d)\Big) - \frac{c^2}{8} (1-\eta^2) \Bigg]=(1+\eta)^{2m+d-2} F_k(\eta) .$$
Then, it is easy to see that $F_k$ has a unique zero in $[0,1]$. Then there exists a unique real number $\eta_n$ so that $pq$ is increasing on $[0,\eta_n]$ and decreasing on $[\eta_n,1]$. Hence, the local maxima of $\phi_k$ are decreasing on $[0,\eta_n]$ and increasing on $[\eta_n,1]$. Let $\eta'_{1,n}$ denote the first zero of $\phi'_k$. The next step is to locate $\eta'_{1,n}$. For this claim, we start by using the change of function $U(\eta):= p^{1/2}(\eta) \phi_k(\eta)$ which transforms \eqref{gen_form} into the following equation on $U$,
$$ U'' + \Big[\frac{p'(\eta)^2-2p''(\eta)p(\eta)}{4p^2(\eta)} + \frac{q_k(\eta)}{p(\eta)}\Big]U = 0, \qquad \eta\in (0,1). $$
We should mention that $U$ and $\phi_k$ have the same zeros on $(0,1)$. \\
Straightforward computations show that $$\frac{p'(\eta)^2-2p''(\eta)p(\eta)}{p^2(\eta)}=\frac{1-\left(m+\frac{d}{2}\right)^2}{(1+\eta)^2}+\frac{4\left(m+\frac{d}{2}-1\right)}{(1+\eta)^2(1-\eta)}+\frac{4}{(1-\eta^2)^2} \geq (m+\frac{d}{2})(1-m-\frac{d}{2})+1 .$$\\
Since $ \displaystyle\frac{q_k(\eta)}{p(\eta)} = \frac{\alpha_k-\frac{c^2}{8}(1+\eta)}{1-\eta^2} \geq \alpha_k -\frac{c^2}{4} $, then  
$$\frac{p'(\eta)^2-2p''(\eta)p(\eta)}{4p^2(\eta)} + \frac{q_k(\eta)}{p(\eta)} \geq \alpha_k -C_{m,d}(c); \qquad C_{m,d}(c) = \frac{c^2}{4}+(m+\frac{d}{2})(m+\frac{d}{2}-1)-1.  $$
Thus, by Sturm comparison theorem, between two zeros of $\sin(\sqrt{\alpha_k -\frac{c^2}{4}}\eta)$ viewed as solution of $V''+\Big(\alpha_k -\frac{c^2}{4} \Big) V=0$, there exists a zero of $U$. Consequently, $\displaystyle \eta_{k,1} \leq \frac{\pi}{\sqrt{\alpha_k -\frac{c^2}{4}}}$ and  $ \displaystyle \eta'_{k,1} \leq \frac{2\pi}{\sqrt{\alpha_k -\frac{c^2}{4}}} = b_k$.\\
Let $\eta \in (0,\eta'_{n,1})$ and recall that 
	$$\Big((1-\eta^2)(1+\eta)^{m+\frac{d}{2}-1} \phi_k'(\eta)\Big)'=-\alpha_{k} \Big(1-\frac{c^2(\eta+1)}{8\alpha_{k}}\Big) (1+\eta)^{m+\frac{d}{2}-1}\phi_k(\eta).$$
	Then by integrating over $[\eta,\eta'_{n,1}]$, one gets 
	$$ \phi_k'(\eta) = -\frac{\alpha_{k}}{(1-\eta^2)(1+\eta)^{m+\frac{d}{2}-1}} \int_{\eta}^{\eta'_{n,1}} (1+t)^{m+\frac{d}{2}-1}\Big(1-\frac{c^2(t+1)}{8\alpha^{(c)}_{km}}\Big) \phi(t)dt. $$
	Using Hölder inequality and the normalization of $\phi_k$, one gets
	\begin{eqnarray}
		|\phi_k'(\eta) | &\leq& \frac{\alpha_{k}}{(1-\eta^2)(1+\eta)^{m+\frac{d}{2}-1}} \sqrt{2^{m+\frac{d}{2}-1}} \left|\int_{\eta}^{\eta'_{n,1}} (1+t)^{\frac{m+\frac{d}{2}-1}{2}}\phi(t)dt\right| \nonumber \\
		&\leq & \sqrt{2^{m+\frac{d}{2}-1}}\frac{\sqrt{\eta'_{k,1}}}{1-\eta'_{k,1}}\alpha_{k} \leq \sqrt{2^{m+\frac{d}{2}-1}}\frac{\sqrt{b_k}}{1-b_k}\alpha_{k}
	\end{eqnarray}
Then, 
$$ |\phi_k(\eta'_{n,1}) | \leq \sqrt{2^{m+\frac{d}{2}-1}} \frac{(b_k)^{3/2}}{1-b_{k}} \alpha_k \leq \frac{3\sqrt{3(m+\frac{d}{2}-1)}}{2} \left(\alpha_{k}\right)^{1/2} . $$
It may be useful to note that the last inequality follows from the fact that $ \displaystyle \alpha_{k}\geq \left(2/3\right)^6 \left(\frac{\pi}{m+\frac{d}{2}-1}\right)^2+\frac{c^2}{4}$.
One can summarize the above discussion as follows : 
$$ \max_{\g{x}\in \B^d} \left| \ps(\g{x}) \right| \leq \max_{\eta \in [-1,1]} \big|\phi^{(m,c)}_k(\eta) \big| \max_{\hat{\g{x}}\in \S^{d-1}}\left|Y^{(m)}_\ell(\hat{\g{x}})\right|  $$
The maximum of $\left|\phi^{(m,c)}_k\right| $ is attained in 1 or in some $\eta \in [a_{m,d},1]$. In both cases, $$ \max_{\eta \in [-1,1]}\big|\phi^{(m,c)}_k(\eta) \big| \leq \frac{3\sqrt{3}}{2}\sqrt{2^{m+\frac{d}{2}+1}(m+\frac{d}{2}-1)} \big(\alpha^{(c)}_{k,m}\big)^{1/2}$$
To conclude for the proof, it suffices to combine the previous inequality with \eqref{max_Y}.
\end{proof}
\subsection{Computation of ball spheroidal wave functions}
Note that $\ps \in L^2(\B^d)$ so that it can be expanded with respect to the ball polynomials basis as 
$$ \ps(\g{x}) = \sum_j \beta_j^{(k,m)} P^{(m)}_{j,l}(\g{x}).  $$
In \cite{Wang3}, authors have used the Bouwkamp method to compute ball PSWFs where it has been shown that the computation of these functions and their eigenvalues amounts to the determination of the eigenvectors and associate eigenvalues of a tridiagonal matrix. Then, it is interesting to study the spectral decay rate of the ball PSWFs expansion coefficients $(\beta_j^{(k,m)})_j$. For this purpose, we recall the finite Fourier transform of ball polynomials given by \eqref{fourier_fini}.
$$ 	\int_{\B^d} e^{-ic<\g{x},\g{y}>} P_{j,\ell}^{(m)}(\g{x}) d\g{x} = \frac{(2\pi)^{d/2} (-i)^m (-1)^j}{2(c\tau)^{\frac{d-1}{2}}} \frac{J_{2j+m+d/2(c \tau)}}{\sqrt{c\tau}}Y_{\ell}^{(m)}(\hat{\g{y}}).$$
Then, 
\begin{eqnarray}
\beta_j^{(k,m)} &=& \int_{\B^d} \ps(\g{x}) P_{j,\ell}^{(m)}(\g{x}) d\g{x}
= \frac{1}{\mu^{(m)}_k(c)} \int_{\B^d} \Bigg( \int_{\B^d} e^{-ic<\g{x},\g{y}>} P_{j,\ell}^{(m)} (\g{x}) d\g{x}\Bigg)\ps(\g{y}) d\g{y}  \nonumber \\
&=&  \frac{(2\pi)^{d/2}(-i)^m (-1)^j}{\mu^{(m)}_k(c) . c^{\frac{d-1}{2}} } \int_0^1 \tau^{m+\frac{d}{2}-\frac{1}{2}} \phi_k^{(m,c)}(2\tau^2-1) \frac{J_{2j+m+d/2(c\tau)}}{\sqrt{c\tau}} d\tau. \nonumber
\end{eqnarray}
Consequently, by Cauchy-Schwartz inequality together with the normalization of the radial part of ball spheroidal wave functions, one gets 
$$ |\beta_j^{(k,m)}| \leq \frac{(2\pi)^{d/2}\times c^{4j+2m+\frac{d-1}{2}}}{2^{4j+2m+d}(4j+2m+d)|\mu^{(m)}_k(c)|}\frac{1}{\Gamma^2(2j+m+\frac{d}{2}+1)}.$$
Recall that form \cite{Batir}, one has
\begin{equation}\label{batir}
\sqrt{2e} \Big(\frac{x+1/2}{e}\Big)^{1/2} \leq \Gamma(x+1) \leq \sqrt{2\pi} \Big(\frac{x+1/2}{e}\Big)^{1/2} .
\end{equation}
Then, by combining the two previous inequalities, one gets 
\begin{equation}
|\beta_j^{(k,m)}| \leq \frac{(2\pi)^{d/2}\times c^{2j+m}}{2^{4j+2m+d}(4j+2m+d)|\mu^{(m)}_k(c)|} \Bigg[\frac{ec}{2j+m+\frac{d+1}{2}}\Bigg]^{2j+m+\frac{d+1}{2}}.
\end{equation}
We may therefore summarize these calculations in the following proposition : 
\begin{proposition}
	For given real number $c>0$, let 
	$$\beta_j^{(k,m)} = \int_{\B^d} \ps(\g{x}) P_{j,\ell}^{(m)}(\g{x}) d\g{x}.$$
	Then we have 
	$$ |\beta_j^{(k,m)}|\leq \frac{(2\pi)^{d/2}\times c^{2j+m}}{2^{4j+2m+d}(4j+2m+d)|\mu^{(m)}_k(c)|} \Bigg[\frac{ec}{2j+m+\frac{d+1}{2}}\Bigg]^{2j+m+\frac{d+1}{2}}.$$
\end{proposition}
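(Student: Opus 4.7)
My plan is to exploit the fact that $\ps$ is an eigenfunction of the finite Fourier operator $\ff_c$, so that $\ps(\g{x})=\mu_k^{(m)}(c)^{-1}\int_{\B^d}e^{-ic\langle \g{x},\g{y}\rangle}\ps(\g{y})\,d\g{y}$. Substituting this identity into $\beta_j^{(k,m)}=\int_{\B^d}\ps(\g{x})P^{(m)}_{j,\ell}(\g{x})\,d\g{x}$ and swapping the two (bounded, compactly supported) integrals by Fubini moves the Fourier kernel onto the ball polynomial. The inner integral is then exactly the finite Fourier transform of $P^{(m)}_{j,\ell}$ which was computed in Lemma \ref{finite_fourier} and yields the explicit factor $\tfrac{(2\pi)^{d/2}(-i)^m(-1)^j}{2}\,(c\tau)^{-d/2}J_{2j+m+d/2}(c\tau)\,Y_\ell^{(m)}(\hat{\g{y}})$.

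Next I would switch the remaining integral over $\B^d$ to spherical coordinates $\g{y}=\tau\hat{\g{y}}$ and insert the separated form $\ps(\g{y})=\tau^{m}\phi_k^{(m,c)}(2\tau^{2}-1)Y_{\ell}^{(m)}(\hat{\g{y}})$ from \eqref{sep_form}. The orthonormality of $\{Y_\ell^{(m)}\}$ on $\S^{d-1}$ collapses the angular integral to $1$ and leaves a single radial integral of the form $\int_0^1 \tau^{m+d/2-1/2}\phi_k^{(m,c)}(2\tau^2-1)\dfrac{J_{2j+m+d/2}(c\tau)}{\sqrt{c\tau}}\,d\tau$, prefactored by $(2\pi)^{d/2}/(\mu_k^{(m)}(c)\,c^{(d-1)/2})$.

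To control this radial integral I would apply Cauchy--Schwarz, splitting so that the $\phi_k^{(m,c)}$ factor is absorbed by the normalization $\int_0^1 \tau^{2m+d-1}|\phi_k^{(m,c)}(2\tau^2-1)|^2\,d\tau=1$, while the Bessel factor is handled by the pointwise estimate \eqref{estim1}, namely $|J_\nu(x)/x^\nu|\le 1/(2^\nu\Gamma(\nu+1))$ with $\nu=2j+m+d/2$. A quick explicit integration then gives a factor $1/(4j+2m+d)$ and a leading power of $c$ that matches the claimed $c^{2j+m}$, together with a $\Gamma^{-2}(2j+m+d/2+1)$ term.

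The last step is the conversion of the two Gamma factors into the stated exponential-type bound $[ec/(2j+m+(d+1)/2)]^{2j+m+(d+1)/2}$. For this I would invoke Batir's inequality \eqref{batir}, $\Gamma(x+1)\ge\sqrt{2e}\,((x+1/2)/e)^{x+1/2}$, with $x=2j+m+d/2$, so that $\Gamma^{-2}$ is dominated by the desired power. I expect the only real bookkeeping obstacle to be matching the numerical constants and powers of $2$ so that the final estimate has precisely the form in the statement; the conceptual pieces (eigenfunction identity, Lemma \ref{finite_fourier}, Bessel bound, Gamma bound) are all already in hand.
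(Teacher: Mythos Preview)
Your proposal is correct and follows essentially the same route as the paper: use the eigenfunction identity for $\ps$, swap integrals so the kernel acts on $P_{j,\ell}^{(m)}$, invoke the explicit finite Fourier transform \eqref{finite_fourier}, reduce to the radial integral via \eqref{sep_form} and orthonormality of $Y_\ell^{(m)}$, then estimate by Cauchy--Schwarz, the Bessel bound \eqref{estim1}, and Batir's inequality \eqref{batir}. The only residual work, as you note, is tracking constants and powers of $2$ and $c$.
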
 
\section{Approximation of almost band-limited functions over the d-dimensional unit ball }
The aim of this section is to study the quality of approximation in the framework of the d-dimensional ball prolate spheroidal wave functions and the ball polynomials series expansion. 
\subsection{Approximation by ball prolate spheroidal wave functions}
In this paragraph, we show that the ball prolate spheroidal wave functions are well adapted for the approximation of almost band-limited functions. For this claim, we start by proving that ball PSWFs are also well adapted to approach functions from the Paley-Wiener space $$\mathcal{B}_c = \{ f\in L^2(\R^d) : \mathcal{F}.f(u) = 0 \quad \forall u \not \in \B^d(0,c) \}.$$
Let us denote by $\displaystyle S^{(M)}_N.f := \sum_{m=0}^{M}\sum_{\ell=1}^{N(d,m)}\sum_{k=0}^{N}<f,\ps> \ps(\g{x}) $ the orthogonal projection of a function $f \in L^2(\B^d)$ on the span of the first ball prolate functions. We may now state our first approximation theorem :
\begin{theorem}
	Let $f\in L^2(\R^d)$ be an $\epsilon_c$-band-limited function with $\Omega = \B(0,c) $. Then, for any positive integer $N\geq ec/2$, we have 
	\begin{equation}
	\norm{f-S^{(M)}_Nf}_{L^2(\B^d)} \leq \left( 2\epsilon_c + C_M (\chi^{(M)}_N(c))^{1/2} |\lambda^{(M)}_N(c)| \right) \norm{f}_{L^2(\R^d)},
	\end{equation} 
where $\displaystyle C_M= \frac{\pi^{d/4}}{\sqrt{\Gamma(\frac{d}{2}+1)}} \frac{3\sqrt{3(M+\frac{d}{2}-1)}}{2}\sqrt{\frac{N(d,M)}{\Omega_{d-1}}}.$.
\end{theorem}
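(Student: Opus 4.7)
The standard route for ``almost band-limited'' estimates is to split $f=f_1+f_2$ where $f_1 := B_c f$ is the Paley--Wiener projection and $f_2 := f - f_1$. By the $\epsilon_\Omega$ hypothesis, $\norm{f_2}_{L^2(\R^d)}=\norm{\hat f}_{L^2(\R^d\setminus \B^d(0,c))}\le \epsilon_\Omega \norm{f}_{L^2(\R^d)}$. Because $S_N$ is the orthogonal projection onto a finite-dimensional subspace of $L^2(\B^d)$, the triangle inequality gives $\norm{f_2-S_N f_2}_{L^2(\B^d)}\le 2\norm{f_2}_{L^2(\B^d)}\le 2\epsilon_\Omega\norm{f}_{L^2(\R^d)}$, producing the $2\epsilon_\Omega$ term. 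So the work is entirely on the band-limited piece $f_1\in\mathcal B_c$.

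For $f_1$, I would use the Fourier inversion formula and the rescaling $\mathbf{y}=c\mathbf{u}$ to write, for $\mathbf{x}\in\B^d$,
\[
 f_1(\mathbf{x}) = \frac{c^d}{(2\pi)^{d/2}}\int_{\B^d} e^{ic\langle \mathbf{x},\mathbf{u}\rangle}\, g(\mathbf{u})\,d\mathbf{u},\qquad g(\mathbf{u}):=\hat f(c\mathbf{u}),
\]
with $\norm{g}_{L^2(\B^d)}^2 = c^{-d}\norm{\hat f}_{L^2(\B^d(0,c))}^2\le c^{-d}\norm{f}_{L^2(\R^d)}^2$. Expanding $g=\sum_n \gamma_n \ps$ in the PSWF basis and using the eigenrelation $\ff_c.\ps=\mu_n(c)\ps$ (after matching signs/conjugates via the parity of the radial--spherical factorisation \eqref{sep_form}), the restriction of $f_1$ to $\B^d$ has PSWF coefficients $\alpha_n$ with $|\alpha_n|=\frac{c^d}{(2\pi)^{d/2}}|\gamma_n|\,|\mu_n(c)|$. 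The key structural fact is therefore that each coefficient of $f_1$ carries an extra factor $\mu_n(c)$, which will ultimately produce the super-exponential gain.

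To extract the exact constant $C$ announced in the theorem, rather than stop at the coarse $L^2$ bound $\sum_{n>N}|\alpha_n|^2\le \tfrac{c^d}{(2\pi)^d}|\mu_{N+1}|^2\norm{f}^2$, I would pass through an $L^\infty$ estimate. For $\mathbf{x}\in\B^d$,
\[
 |f_1(\mathbf{x})-S_N f_1(\mathbf{x})|\le \frac{c^d}{(2\pi)^{d/2}}\sum_{n>N}|\gamma_n|\,|\mu_n(c)|\,|\ps(\mathbf{x})|.
\]
Inserting the pointwise bound $\norm{\ps}_\infty\le \tfrac{3\sqrt{3(m+d/2-1)}}{2}\sqrt{N(d,m)/\Omega_{d-1}}\,\sqrt{\chi^{(m)}_n(c)}$ from Theorem~\ref{max} (applicable because the hypothesis $N\ge ec/2$ forces $\chi^{(m)}_N(c)$ well above the threshold there) and Cauchy--Schwarz in $\ell^2$ then yields
\[
 \norm{f_1-S_N f_1}_{L^\infty(\B^d)}\le \frac{c^{d/2}}{(2\pi)^{d/2}}\,B_{m,d}\,\norm{f}_{L^2}\,\Big(\sum_{n>N}|\mu_n(c)|^2\chi^{(m)}_n(c)\Big)^{1/2},
\]
with $B_{m,d}$ the sup-norm constant from Theorem~\ref{max}. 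The inequality $\norm{\cdot}_{L^2(\B^d)}\le \sqrt{\mu(\B^d)}\,\norm{\cdot}_{L^\infty(\B^d)}$, together with $\mu(\B^d)=\pi^{d/2}/\Gamma(d/2+1)$, reproduces the factor $\pi^{d/4}/\sqrt{\Gamma(d/2+1)}$ appearing in~$C$.

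The main technical step is the final tail estimate $\sum_{n>N}|\mu_n(c)|^2\chi^{(m)}_n(c)\lesssim |\mu_N(c)|^2\chi^{(m)}_N(c)$. Under $N\ge ec/2$, the upper bound $|\mu_n|\lesssim \bigl[ec/(4n+2m+d)\bigr]^{4n+2m+d}$ from Proposition~3.2 and the quadratic growth $\chi^{(m)}_n\le (m+2n)(m+2n+d)+c^2$ make the ratio of successive terms summable and the geometric comparison explicit, so this sum is controlled by its first term (up to an absolute factor that may be absorbed into~$C$ or verified to be~$1$). Combining this with the triangle inequality closes the estimate.
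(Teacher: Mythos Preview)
Your proposal is correct and follows the same approach as the paper: split $f=B_c f+(f-B_c f)$, use the eigenrelation for $\ff_c$ to extract a factor $|\mu_k(c)|$ from each PSWF coefficient of the band-limited part, invoke Theorem~\ref{max} for the sup-norm, and bound the resulting tail $\sum_{k>N}|\mu_k(c)|^2\chi^{(m)}_k(c)$ by its first term. The only cosmetic difference is that the paper works directly with Parseval and bounds each coefficient via $\bigl|\int_{\B^d}\hat f(c\,\cdot)\,\ps\bigr|\le \sqrt{\mu(\B^d)}\,\|\ps\|_\infty\,\|\hat f(c\,\cdot)\|_{L^2(\B^d)}$, whereas you reach the identical tail sum through an $L^\infty(\B^d)$ detour before passing back to $L^2(\B^d)$; the paper's route is marginally shorter but lands in the same place.
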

Note that the super-exponential decay rate of $\lambda^{(M)}_k$ described in theorem \ref{max} shows that the dependence on $M$ of $C_M$ and $\chi^{(M)}_N$ is insignificant. Thus,  ball prolates spheroidal wave functions are well adapted for the approximation of almost band-limited functions.
	\begin{proof} Let us first study the case where $f \in \mathcal{B}_c$.
	By Parseval's inequality, 
	\begin{equation}\label{1}
	\displaystyle \norm{f-S^{(M)}_N.f}^2_2 = \sum_{m>M}\sum_{k>N}\sum_{\ell=1}^{N(d,m)} |<f,\ps>|^2.
	\end{equation}
	Then, the main step in this proof is how to estimate $|<f,\ps>|$.\\
	Recall that from the Fourier inversion formula, one has $ \displaystyle f(\g{x}) = \Big(\frac{c}{2\pi}\Big)^d \int_{\B^d} e^{ic<\g{x},\g{y}>} \hat{f}(\g{y}) d\g{y} $. Consequently, for any positive integer $k$, we have
	\begin{eqnarray}\label{2}
	<f,\ps> &=& \int_{\B^d}f(\g{x}) \ps(\g{x}) d\g{x} = \Big(\frac{c}{2\pi}\Big)^d \int_{\B^d} \ps(\g{x}) \int_{\B^d} e^{ic<\g{x},\g{y}>} \hat{f}(\g{y}) d\g{y} d\g{x} \nonumber \\
	&=& \Big(\frac{c}{2\pi}\Big)^d\int_{\B^d}\hat{f}(\g{y}) \int_{\B^d} e^{ic<\g{x},\g{y}>}\ps(\g{x})d\g{x} d\g{y} \nonumber \\
	&=& \Big(\frac{c}{2\pi}\Big)^d |\mu^{(m)}_k(c)| \int_{\B^d}\hat{f}(\g{y})\ps(\g{y}) d\g{y} \nonumber \\
	&\leq& \Big(\frac{c}{2\pi}\Big)^d \frac{\pi^{d/4}}{\sqrt{\Gamma(d/2+1)}} |\mu^{(m)}_k(c)| \sup_{y\in \B^d} |\ps(\g{y})| \norm{f}_{L^2(\R^d)} \nonumber \\
	&\leq& C_m\Big(\frac{c}{2\pi}\Big)^d \frac{\pi^{d/4}}{\sqrt{\Gamma(d/2+1)}} |\mu^{(m)}_k(c)|(\chi^{(m)}_k)^{1/2} \norm{f}_{L^2(\R^d)}.
	\end{eqnarray}
Note that the last inequality follows from theorem \ref{max}. Finally, by combining the last inequality together with \eqref{1}, \eqref{2} as well as the super-exponential decay rate of $\lambda^{(m)}_k$ , one gets 
	$$  \norm{f-S_Nf}_{L^2(\B^d)} \leq \left( C |\mu_N(c)| (\chi_N(c))^{1/2} \right) \norm{f}_{L^2(\R^d)}. $$
    Now, let $f$ be an $\epsilon_c$-band-limited function. \\
	We have
	$$ \norm{f-S_Nf}_{L^2(\B^d)} \leq \norm{f-B_cf}_{L^2(\B^d)} + \norm{B_cf-S_NB_cf}_{L^2(\B^d)} + \norm{S_N[B_cf-f]}_{L^2(\B^d)}. $$
	On the other hand, since $f$ is $\epsilon_c$-band-limited then $\norm{B_cf-f}_{L^2(\B^d)} \leq \epsilon_c\norm{f}_{L^2(\R^d)}$.
	Using the fact that $S_N$ is a contraction, one gets 
	$$ \norm{S_N[B_cf-f]}_{L^2(\B^d)} \leq \norm{B_cf-f}_{L^2(\B^d)} \leq \epsilon_c \norm{f}_{L^2(\R^d)} . $$
	Finally, we apply the previous proposition to the second term by noticing that $ B_cf $ is a band-limited function.
\end{proof}

\subsection{Approximation of almost band limited functions by ball polynomials}
We study in this section the quality of approximation of almost band limited functions by their expansion in the basis of ball polynomials. To do so, we start by the following technical lemma controlling the general term of the associated projection series. 
\begin{lemma}
	Let $c>0$. Then, for any $f\in B_c$ and any $ k \geq \frac{ec}{2} $
	\begin{equation}\label{ineq_ball}
		\left| <f, \jp>_{L^2(\B^d)} \right| \leq \frac{1}{2^{2k+m+\frac{d}{2}+1}\sqrt{2ec(4k+3m+d)}}
		\left( \frac{ec}{2k+m+\frac{d+1}{2}} \right)^{2k+m+\frac{d+1}{2}}.
	\end{equation}
\end{lemma}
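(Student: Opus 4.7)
The plan is to pass to the Fourier side and exploit the explicit finite Fourier transform of ball polynomials. Since $f\in\mathcal{B}_c$, Fourier inversion together with the change of variable $\g{w}=c\g{y}$ gives
$$ f(\g{x})=\frac{c^d}{(2\pi)^{d/2}}\int_{\B^d}e^{ic\langle\g{x},\g{y}\rangle}\hat f(c\g{y})\,d\g{y}. $$
Substituting this into $\langle f,\jp\rangle_{L^2(\B^d)}$ and exchanging the order of integration (Fubini is justified by compact supports) reduces the computation to evaluating $\int_{\B^d}e^{ic\langle\g{x},\g{y}\rangle}\jp(\g{x})\,d\g{x}$. By conjugating the identity \eqref{finite_fourier} (equivalently, by the substitution $\g{y}\mapsto-\g{y}$ and the parity $Y_\ell^m(-\hat{\g{y}})=(-1)^mY_\ell^m(\hat{\g{y}})$ of homogeneous harmonics), this inner integral equals
$$ \frac{(2\pi)^{d/2}(i)^m(-1)^k}{2}\,\frac{J_{2k+m+d/2}(c\tau)}{(c\tau)^{d/2}}\,Y_\ell^m(\hat{\g{y}}),\qquad \g{y}=\tau\hat{\g{y}}. $$

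Putting these together and taking absolute values yields
$$ \bigl|\langle f,\jp\rangle\bigr|\leq \frac{c^d}{2}\int_{\B^d}|\hat f(c\g{y})|\left|\frac{J_{2k+m+d/2}(c\tau)}{(c\tau)^{d/2}}\right||Y_\ell^m(\hat{\g{y}})|\,d\g{y}. $$
The key pointwise estimate \eqref{estim1} gives $\bigl|J_{2k+m+d/2}(c\tau)/(c\tau)^{d/2}\bigr|\leq (c\tau)^{2k+m}/\bigl(2^{2k+m+d/2}\Gamma(2k+m+d/2+1)\bigr)$. Pulling this factor out and applying Cauchy--Schwarz in $\g{y}$, the $\hat f$-factor is controlled by $\|\hat f(c\,\cdot)\|_{L^2(\B^d)}=c^{-d/2}\|f\|_{L^2(\R^d)}$ (Plancherel plus the band-limit), and the remaining weight $\int_{\B^d}(c\tau)^{2(2k+m)}|Y_\ell^m(\hat{\g{y}})|^2 d\g{y}=c^{4k+2m}/(4k+2m+d)$ by the normalization of $Y_\ell^m$ on $\S^{d-1}$.

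Collecting powers of $c$ gives the intermediate bound
$$ \bigl|\langle f,\jp\rangle\bigr|\leq \frac{c^{2k+m+d/2}\,\|f\|_{L^2(\R^d)}}{2^{2k+m+d/2+1}\sqrt{4k+2m+d}\;\Gamma\bigl(2k+m+\tfrac{d}{2}+1\bigr)}. $$
The last step is to invoke the lower bound from Batir's inequality \eqref{batir}, $\Gamma(2k+m+d/2+1)\geq\sqrt{2e}\bigl((2k+m+(d+1)/2)/e\bigr)^{2k+m+(d+1)/2}$, absorbing one extra factor of $c^{-1/2}$ together with $e^{2k+m+(d+1)/2}$ into $(ec/(2k+m+(d+1)/2))^{2k+m+(d+1)/2}$ and merging $\sqrt{2e}\cdot\sqrt{4k+2m+d}=\sqrt{2e(4k+2m+d)}$ into $\sqrt{2ec(4k+2m+d)}$. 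This delivers exactly the claimed form (up to the normalizing factor $\|f\|_{L^2(\R^d)}$ implicit in the statement; also the radicand $4k+3m+d$ printed in \eqref{ineq_ball} appears to be a typo for $4k+2m+d$). The main bookkeeping hurdle is simply tracking the exponents of $2$, $e$ and $c$ through the Cauchy--Schwarz step and Batir's inequality; the mechanism itself is the standard coupling of Fourier inversion, the Bessel-type Fourier transform of ball polynomials, and a sharp $\Gamma$-estimate.
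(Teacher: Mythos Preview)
Your proof is correct and follows essentially the same route as the paper: Fourier inversion for $f\in\mathcal B_c$, the explicit finite Fourier transform \eqref{finite_fourier} of the ball polynomials, the pointwise Bessel bound \eqref{estim1}, Cauchy--Schwarz, and finally Batir's inequality \eqref{batir}. The only cosmetic difference is that the paper splits Cauchy--Schwarz into a spherical step followed by a radial step via an auxiliary function $g(\rho)$, whereas you apply it once over $\B^d$; your observation that the radicand should read $4k+2m+d$ rather than $4k+3m+d$ is correct (the paper's version stems from a spurious extra $\rho^m$ in its polar-coordinate computation).
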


\begin{proof}
	Let $f\in B_c$ be a band-limited function. Then, by the Fourier inversion formula, we have 
	$$ f(x) = \frac{1}{(2\pi)^{d/2}} \int_{\B^d(0,c)} \hat{f}(\g{y}) e^{i<\g{x},\g{y}>} d\g{y} = \Big(\frac{c^2}{2\pi}\Big)^{d/2} \int_{\B^d} \hat{f}(c\g{y}) e^{ic<\g{x},\g{y}>} d\g{y} .$$
	Recall that the finite Fourier transform of the ball polynomials is given by 
	$$ \int_{\B^d} e^{ic<\g{x},\g{y}>} \jp(\g{y}) d\g{y} = (2\pi)^{d/2} \frac{(-i)^m (-1)^j}{2} \frac{J_{2k+m+d/2}(c\rho)}{(c\rho)^{d/2}} Y^m_{\ell} (\hat{\g{x}}) \qquad \g{x}=\rho \hat{\g{x}}. $$
	Then, by these last two relations,
	\begin{eqnarray}
		<f,\jp> &=& \int_{\B^d} f(\g{x}) \jp(\g{x}) d\g{x} = \left( \frac{c^2}{2\pi} \right)^{d/2} \int_{\B^d} \int_{\B^d} \hat{f}(cy) e^{ic<\g{x},\g{y}>} d\g{y} \jp(\g{x}) d\g{x} \nonumber \\
		&=& c^d \frac{(-i)^m (-1)^j}{2} \int_{0}^{1} \rho^{m+d-1} \int_{\S^{d-1}} \hat{f}(c\rho \hat{\g{y}} ) \frac{J_{2k+m+d/2}(c\rho)}{(c\rho)^{d/2}}Y^m_{\ell} (\hat{\g{x}}) d\sigma(\hat{\g{y}}) d\rho.
	\end{eqnarray}
By \eqref{estim1}, one gets 
$$ \left| \frac{J_{2k+m+d/2}(c\rho)}{(c\rho)^{d/2}} \right| \leq \frac{(c\rho)^{2k+m}}{2^{2k+m+\frac{d}{2}} \Gamma(2k+m+\frac{d}{2}+1) },$$
Then,
\begin{equation}
	\left| <f,\jp> \right| \leq \frac{c^{2k+2m+d-1}}{2^{2k+m+1+d/2} \Gamma(2k+m+1+d/2)} \int_0^1 \rho^{2k+2m+d-1} \left|\int_{\S^{d-1}} \hat{f}(c\rho \hat{\g{y}}) Y^m_\ell(\hat{\g{y}})  d\sigma(\hat{\g{y}})\right| d\rho.
\end{equation}
On the other hand 
$$ \left| \int_{\S^{d-1}}\hat{f}(c\rho \hat{\g{y}}) Y^m_\ell(\hat{\g{y}})  d\sigma(\hat{\g{y}})  \right| \leq \left( \int_{\S^{d-1}} \left| \hat{f}(c\rho \hat{\g{y}}) \right|^2 d\sigma(\hat{\g{y}}) \right)^{1/2} $$
Now let $ \displaystyle g(\rho) := \rho^{\frac{m+d-1}{2}} \left( \int_{\S^{d-1}} \left| \hat{f}(c\rho \hat{\g{y}}) \right|^2 d\sigma(\hat{\g{y}}) \right)^{1/2}$. We remark that 
\begin{equation}
	\int_{0}^{1} \left| g(\rho) \right|^2 d\rho = \int_{\B^d} \left| \hat{f}(c\g{y}) \right|^2 d\g{y} = \frac{1}{c^{d}} \norm{f}^2_2.
\end{equation}
Therefore, 
$$	\left| <f,\jp> \right| \leq  \frac{c^{2k+m+d/2}}{2^{2k+m+1+\frac{d}{2}} \sqrt{4k+3m+d}\Gamma(2k+m+1+\frac{d}{2})} \norm{f}_2. $$
To conclude for the proof of this lemma, it remains to estimate $\Gamma(2k+m+1+\frac{d}{2})$ via Batir's inequality \eqref{batir}.
\end{proof}
 Analogously to the previous case, we define $\displaystyle \Pi^{(M)}_N.f := \sum_{m=0}^{M}\sum_{\ell=1}^{N(d,m)}\sum_{k=0}^{N} <f,\jp> \jp$ the orthogonal projection on the span of the first ball polynomials.
We have now all the ingredients to state our second approximation theorem.
\begin{theorem}
	Let $f\in L^2(\R^d)$ be an $\epsilon_c$-band-limited function. Then, for any positive integer $N\geq \frac{ec-\frac{d+1}{2}}{2}$, we have 
	\begin{equation}\label{approx2}
		\norm{f-\Pi^{(M)}_Nf}_{L^2(\B^d)} \leq \left( 2 \epsilon_c + C_N \left( \frac{ec}{2(N+1)+(M+1)+\frac{d+1}{2}} \right)^{2(N+1)+(M+1)+\frac{d+1}{2}}\right) \norm{f}_{L^2(\R^d)}.
	\end{equation} 
Here $C_N := \frac{1}{2^{2N+M+\frac{d}{2}+3}\sqrt{ec(4N+3M+d+4)}}\left[1+\frac{1}{4\ln \left( \frac{ec}{2N+M+2+\frac{d+1}{2}} \right)}\right]^{1/2}.$
\end{theorem}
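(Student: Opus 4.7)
The plan is to follow the same two-step template used in the proof of Theorem~B for ball PSWFs. First I would prove the bound assuming $f \in \mathcal{B}_c$, where the lemma \eqref{ineq_ball} controls each coefficient $\langle f, \jp \rangle$; then I would extend to a general $\epsilon_\Omega$-band-limited $f$ by a triangle-inequality argument with $B_c f$ as the intermediate function, exactly as in the proof of Theorem~B.

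For the band-limited case, Parseval's identity in $L^2(\B^d)$ applied to the orthonormal family $\{\jp\}$ yields
\[
\norm{f-\Pi_N f}^2_{L^2(\B^d)} \;=\; \sum_{k>N}\bigl|\langle f,\jp\rangle\bigr|^2.
\]
Apply the preceding lemma to each term to obtain
\[
\norm{f-\Pi_N f}^2_{L^2(\B^d)}\;\leq\;\norm{f}_{L^2(\R^d)}^2\sum_{k\geq N+1} A_k^2,
\]
with
\[
A_k \;=\; \frac{1}{2^{2k+m+\frac{d}{2}+1}\sqrt{2ec(4k+3m+d)}}\left(\frac{ec}{2k+m+\frac{d+1}{2}}\right)^{2k+m+\frac{d+1}{2}}.
\]
The key step is then to bound the tail $\sum_{k \geq N+1}A_k^2$ by $2\, A_{N+1}^2\bigl[1+\frac{1}{4L_N}\bigr]$ with $L_N = \ln\!\bigl(ec/(2N+m+2+\tfrac{d+1}{2})\bigr)$, which, after taking square roots, produces exactly the constant $C_N$ and the factor $\bigl(ec/(2(N+1)+m+\frac{d+1}{2})\bigr)^{2(N+1)+m+(d+1)/2}$ appearing in \eqref{approx2}.

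To carry out this tail estimate, I would write the ratio
\[
\frac{A_{k+1}}{A_k} \;=\; \Bigl(\frac{ec}{2k+2+m+\frac{d+1}{2}}\Bigr)^{2}\Bigl(\frac{2k+m+\frac{d+1}{2}}{2k+2+m+\frac{d+1}{2}}\Bigr)^{2k+m+\frac{d+1}{2}}\cdot(\text{small factor}),
\]
expand the logarithm $\ln(1+2/\ell)$ with $\ell=2k+m+(d+1)/2$, and use the hypothesis $2(N+1)+m+\frac{d+1}{2}\geq ec$ to check that $A_{k+1}/A_k\leq\rho$ for some $\rho<1$ depending on $L_N$. A geometric series then gives $\sum_{k\geq N+1}A_k^2\leq A_{N+1}^2/(1-\rho^2)$, and matching this with the form $2[1+1/(4L_N)]$ is exactly what produces the logarithmic correction in $C_N$. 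This is the step I expect to be the most delicate: the logarithmic term reflects the precise rate at which $A_{k+1}/A_k$ approaches its limiting value near $k=N$, so the asymptotic expansion needs to be carried to second order.

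For the general $\epsilon_\Omega$-band-limited case, I decompose
\[
\norm{f-\Pi_N f}_{L^2(\B^d)}\;\leq\;\norm{f-B_c f}_{L^2(\B^d)}+\norm{B_c f-\Pi_N B_c f}_{L^2(\B^d)}+\norm{\Pi_N(B_c f-f)}_{L^2(\B^d)}.
\]
The first term is at most $\norm{f-B_c f}_{L^2(\R^d)}=\bigl(\int_{\norm{x}>c}|\hat f|^2\bigr)^{1/2}\leq\epsilon_\Omega\norm{f}_{L^2(\R^d)}$ by Plancherel and the almost-band-limited hypothesis; the third term is bounded by the same quantity because $\Pi_N$ is an orthogonal projection on $L^2(\B^d)$ and hence a contraction; and the middle term is controlled by the band-limited case already established, applied to $B_c f\in\mathcal{B}_c$ with $\|B_c f\|_{L^2(\R^d)}\leq\|f\|_{L^2(\R^d)}$. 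Summing these three contributions yields the announced bound \eqref{approx2}.
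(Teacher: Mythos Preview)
Your overall strategy is correct and matches the paper: first treat $f\in\mathcal{B}_c$ via Parseval and the coefficient lemma \eqref{ineq_ball}, then pass to general $\epsilon_\Omega$-band-limited $f$ by the same triangle-inequality decomposition with $B_cf$ and the contraction property of $\Pi_N$.

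The only substantive difference is in how the tail $\sum_{k\ge N+1}A_k^2$ is estimated. The paper does something simpler than your ratio/geometric-series plan: it first pulls the monotone prefactor $2^{-(4k+2m+d+2)}(4k+3m+d)^{-1}$ out at its value for $k=N+1$, leaving only the sum
\[
\sum_{k\ge N+1}\left(\frac{ec}{2k+m+\frac{d+1}{2}}\right)^{4k+2m+d+1}.
\]
It then keeps the $k=N+1$ term and, for $k\ge N+2$, \emph{freezes the base} at its value for $k=N+1$ (which only increases each term since the base is $<1$ by hypothesis) and dominates the resulting sum by the integral $\int_{N+1}^\infty e^{4xL_N}\,dx$. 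This integral evaluates exactly to $-\,e^{4(N+1)L_N}/(4L_N)$, which is where the $1/(4L_N)$ correction in $C_N$ comes from directly---no second-order expansion of $\ln(1+2/\ell)$ is needed. Your geometric-series route would yield a valid bound of the same order, but recovering the \emph{exact} constant $C_N$ stated in the theorem from $1/(1-\rho^2)$ would be awkward; the paper's freeze-the-base-and-integrate trick is both shorter and tailored to produce that specific constant.
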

\begin{proof}
	Let $f\in \mathcal{B}_c$. Note that for $\g{x}\in \B^d$, $$ \displaystyle f(x) - \Pi_N.f(x) = \sum_{m>M}\sum_{k>N}\sum_{\ell=1}^{N(d,m)} <f,\jp> \jp(x). $$ Then,
	\begin{eqnarray}
		\norm{f-\Pi_N.f}_{L^2(\B^d)}^2 &=& \sum_{m>M}\sum_{k>N}\sum_{\ell=1}^{N(d,m)} |<f,\jp>|^2 \nonumber\\
		&\leq& \sum_{m>M}\sum_{k>N}\sum_{\ell=1}^{N(d,m)} \frac{1}{2^{4k+2m+d+2}ec(4k+3m+d)}
		\left( \frac{ec}{2k+m+\frac{d+1}{2}} \right)^{4k+2m+d+1} \nonumber \\
		&\leq & \frac{1}{2^{4N+2M+d+6}ec(4N+3M+d+4)}\sum_{m>M}\sum_{k>N}\sum_{\ell=1}^{N(d,m)} \left( \frac{ec}{2k+m+\frac{d+1}{2}} \right)^{4k+2m+d+1}.
	\end{eqnarray}
Remark that $k$ and $m$ behave in the same way in the main decay factor. To simplify the proof and make it clearer, we will see only what happens with respect to $k$ and the proof of the behavior with respect to $m$ is almost identical. Let us treat the main factor separately, 
\begin{eqnarray}
	& &\sum_{k=N+1}^\infty \left( \frac{ec}{2k+m+\frac{d+1}{2}} \right)^{4k+2m+d+1} \nonumber \\ &=& 
	\left( \frac{ec}{2N+m+2+\frac{d+1}{2}} \right)^{4N+2m+d+5}+\sum_{k=N+2}^\infty \left( \frac{ec}{2k+m+\frac{d+1}{2}} \right)^{4k+2m+d+1} \nonumber \\
	&\leq& \left( \frac{ec}{2N+m+2+\frac{d+1}{2}} \right)^{4N+2m+d+5} + \int_{N+1}^\infty \left( \frac{ec}{2(N+2)+m+\frac{d+1}{2}} \right)^{4x+2m+d+1} dx \nonumber \\
	&=& \left( \frac{ec}{2N+m+2+\frac{d+1}{2}} \right)^{4N+2m+d+5} + \nonumber\\ & & \exp\left[(2m+d+1) \ln \left( \frac{ec}{2N+m+2+\frac{d+1}{2}} \right) \right] \int_{N+1}^{\infty} \exp\left[4x \ln \left( \frac{ec}{2N+m+2+\frac{d+1}{2}} \right) \right] dx \nonumber \\
	&\leq& \left[1+\frac{1}{4\ln \left( \frac{ec}{2N+m+2+\frac{d+1}{2}} \right)}\right] \left( \frac{ec}{2N+m+2+\frac{d+1}{2}} \right)^{4N+2m+d+5}.  \nonumber
\end{eqnarray}
Thus, for all $f\in \mathcal{B}_c$
\begin{equation}\label{last_ineq}
	\norm{f-\Pi^{(M)}_Nf}_{L^2(\B^d)} \leq  C_{N,M} \left( \frac{ec}{2(N+1)+(M+1)+\frac{d+1}{2}} \right)^{2(N+1)+(M+1)+\frac{d+1}{2}} \norm{f}_{L^2(\R^d)}.
\end{equation}
 Now, let $f$ be an $\epsilon_c$-band-limited function. \\
$$ \norm{f-\Pi^{(M)}_Nf}_{L^2(\B^d)} \leq \norm{f-B_cf}_{L^2(\B^d)} + \norm{B_cf-\Pi^{(M)}_NB_cf}_{L^2(\B^d)} + \norm{\Pi^{(M)}_N[B_cf-f]}_{L^2(\B^d)}. $$
On the other hand, since $f$ is $\epsilon_c$-band-limited, then $\norm{B_cf-f}_{L^2(\B^d)} \leq \epsilon_c\norm{f}_{L^2(\R^d)}$.
Using the fact that $\Pi^{(M)}_N$ is a contraction, one gets 
$$ \norm{\Pi^{(M)}_N[B_cf-f]}_{L^2(\B^d)} \leq \norm{B_cf-f}_{L^2(\B^d)} \leq \epsilon_c \norm{f}_{L^2(\R^d)} . $$
By the previous analysis and \eqref{last_ineq}, \eqref{approx2} follows at once.
\end{proof}

\end{document}